\newtheorem{Problem}{Problem}[section]
\newtheorem{Remark}{Remark}[section]
\newtheorem{Lemma}{Lemma}[section]
\newtheorem{Theorem}{Theorem}[section]
\newtheorem{Assumption}{Assumption}[section]
\title{Formation Control of Rigid Graphs with a Flex Node Addition}
\author{Viet Hoang Pham$^{\dag}$, Minh Hoang Trinh$^{\dag}$, and Hyo-Sung Ahn$^{\dag}$ 
\thanks{The full version of this paper with general extensions has been submitted to a journal for publication.}
\thanks{\small $^{\dag}$The authors are with School of Mechanical Engineering,
Gwangju Institute of Science and Technology, Gwangju, Korea.
E-mail: {vietph@gist.ac.kr}; {trinhhoangminh@gist.ac.kr}; {hyosung@gist.ac.kr.}}%
}
\begin{document}

\maketitle 
\thispagestyle{empty}
\pagestyle{empty}


\begin{abstract}
This paper examines stability properties of distance-based formation control when the underlying topology consists of a rigid graph and a flex node addition. It is shown that the desired equilibrium set is locally asymptotically stable but there exist undesired equilibria. Specifically, we further consider two cases where the rigid graph is a triangle in $2$-D and a tetrahedral in $3$-D, and prove that any undesired equilibrium point in these cases is unstable. Thus in these cases, the desired formations are almost globally asymptotically stable.
\end{abstract}

\section{Introduction}
As a solution of the distance-based formation control problem, gradient descent control laws have been extensively studied \cite{Kwangkyo2015,Krick2009,Dorler2010,Dimarogonas2010,Dasgupta2011,Kwangkyo2014,Park2014,Sun2015}. Given a system of $n$ single integrator agents employing the gradient control law derived from some potential functions, it is well known that local asymptotic stability of the formations is guaranteed when the interaction graph is undirected and rigid \cite{Krick2009,Kwangkyo2014}.

Several results on (almost) global stability of these formations can also be found, for examples, the three-agent formation in the $2$-D space \cite{Dorler2010} or the four-agent formation in $3$-D space \cite{Park2014}. A common strategy adopted in these papers is showing non-existence or instability of the undesired equilibrium set. Then, if the formation is initially in a generic position \cite{Sun2015} and is not in the undesired equilibrium set, it will asymptotically converge to a point in the desired equilibrium set, i.e., a desired formation. The existence of undesired equilibria associated with a undesired formation shape can be observed from numerical simulations when $n \ge 4$ \cite{Krick2009,Dasgupta2011}. In fact, the gradient descent control laws fail to globally stabilize $n$-agents formations.

Alternative control laws were proposed, e.g., designing control weights for stabilization of affine formations \cite{Zhiyun2016}; simultaneously aligning the agents' local coordinate frames and controlling the relative position \cite{Miguel2015,Kwangkyo20142}; or perturbing the agents' trajectories by quasi-random directional noises to escape unstable undesired equilibria \cite{Tian2013}. These strategies provide global convergence of the formation to the desired shape,  however, there are also trade-off on these solutions. In affine formations, all agents are required to have the same coordinate systems. Orientation alignment algorithm requires exchanging information between agents. The perturbations cannot guarantee a global convergence to the desired formation since stable undesired equilibria could exist.

In almost all works have been reported \cite{Kwangkyo2015,Krick2009,Dorler2010,Dimarogonas2010,Dasgupta2011,Kwangkyo2014,Park2014,Sun2015}, the desired formation graphs are usually assumed to be
rigid since these graphs preserve the formation shape at least in local sense. However, there are scenarios in which we do not need all agents in the system to be remained in a rigid shape. For example, consider a group consisting of several vehicles which have to move in a prescribed formation in the plane and a flying UAV whose partial tasks are supervising or guiding these vehicles to a desired region. Practically, the UAV only needs to keep a distance constraint to a vehicle and saves its remaining degree of freedoms for other tasks. This paper devotes to study these scenarios. More specifically, we examine the distance based formation problem when the underlying graph is a rigid graph adding a flex node. Two specific cases are studied in detail in this paper, in which the rigid graphs are the triangle in $2$-D space and the tetrahedral in $3$-D space.

Consequently, the contributions of this paper can be summarized as follows. We give analysis on the effect of the added flex node to the rigid formation. Although the flex node may act as a disturbance to the rigid formation, the set of all desired distance constraints specified in the overall formation is proven to be locally asymptotically stable. Furthermore, when the rigid graph is triangle in the plane or a tetrahedral in $3$-D space, any undesired equilibrium point is unstable, which implies the desired formations are almost globally asymptotically stable. To examine the effects of motions of flex node more rigorously, we further suppose that the flex node is governed by a finite velocity or required to go to a specific position. Under these circumstances, it is still shown that the desired formations are almost globally asymptotically stable. 


The rest of this paper is organized as follows. In Section II, we briefly review some background related with formation control. In Section III, we show instability of undesired equilibrium points for two specific cases: a triangle adding a flex node in $2$-D space and a tetrahedron adding a flex node in $3$-D space. In Section IV, we consider the case when the flex agent has one more additional control input to go to a specific position. Simulations supporting our analysis are provided in Section V. Finally, Section VI provides the concluding remarks.

\section{Preliminaries}
\subsection{Graph representation of the formation}
We use an undirected graph $\mathcal{G} = \left( {\mathcal{V}, \mathcal{E}} \right)$ to describe the underlying topology of agents. Each agent corresponds to a node of the graph, and each edge linking nodes $i$ and $j$ determines a distance constraint that needs to be preserved.
The edge set $\mathcal{E}$ can be partitioned as $\mathcal{E} = \mathcal{E}_+ \cup \mathcal{E}_-$ such that $\mathcal{E}_+ \cap \mathcal{E}_- = \emptyset$ and $(i,j) \in \mathcal{E}_+$ if and only if $(j,i) \in \mathcal{E}_-$. For simplicity, we assume $\mathcal{E}_+ = \left \{ (i,j) \in \mathcal{E}: i<j \right \}$ and use $\epsilon_k$ to denote an edge in $\mathcal{E}_+$, $\mathcal{E}_+ = \left \{ \epsilon_1,...,\epsilon_m \right \}$. Denote $\textbf{B} = {\left[ {{b_{ij}}} \right]_{n \times m}}$ as the incidence matrix of graph
\[{b_{ij}} = \left\{ \begin{array}{lcl}
1&\textrm{ if i is the sink node of edge } \epsilon_j, \\
-1&\textrm{ if i is the source node of edge } \epsilon_j, \\
0&\textrm{otherwise.}
\end{array} \right.\]
With Kronecker product, we define matrix $\bar {\textbf{B}} = \textbf{B} \otimes {\textbf{I}_d}$, where $\textbf{I}_d$ is $d \times d$ identify matrix.
Let ${\textbf{p}_i} \in \mathbb{R} {^d}$ be the position  of agent $i$. The stacked vector $\textbf{p} = \left[ \textbf{p}_1^T, \textbf{p}_2^T,..., \textbf{p}_{N+1}^T \right]^T \in {\mathbb{R}^{\left( N+1 \right)d}}$ represents the realization of graph $\mathcal{G}$. Without notation confusion, for $k$-th edge in $\mathcal{E}_+$ linking nodes $i$ and $j$, $\epsilon_k=(i,j) \in \mathcal{E}_+$, we denote the relative position vector $\textbf{z}_{\epsilon_k} = \textbf{z}_{ij} = \textbf{p}_i-\textbf{p}_j$. Let $\textbf{z} = \left[ \textbf{z}_{\epsilon_1}^T, \textbf{z}_{\epsilon_2}^T,..., \textbf{z}_{\epsilon_m}^T \right]^T$, then we have
\[\textbf{z} = \bar {\textbf{B}}^T \textbf{p}.\]
Assume $\mathcal{G}_r=\left( \mathcal{V}_{\mathcal{G}_r},\mathcal{E}_{\mathcal{G}_r} \right)$ is a rigid graph where $\mathcal{V}_{\mathcal{G}_r}=\left\{ {1,2,...,N} \right\}$. In this paper, we consider formations whose underlying graph contains rigid graph $\mathcal{G}_r$ and an additional flex node, i.e., $\mathcal{G} = \left( {\mathcal{V}, \mathcal{E}} \right)$, $\mathcal{V} = \left\{ {1,2,...,N,N+1} \right\}$ and $\mathcal{E}=\mathcal{E}_{\mathcal{G}_r} \cup \left\{(N,N+1)\right\}$.
\begin{figure}[htb]
\begin{center}
\includegraphics[width = 2 in]{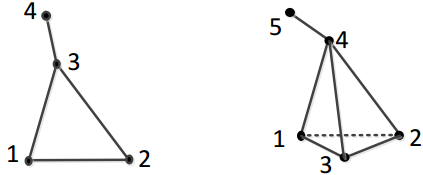}
\end{center}
\caption{Two examples of rigid graph adding a flex node: A triangle (1,2,3) adding a flex node (4) moving in the plane and a tetrahedron (1,2,3,4) adding a flex node (5) moving in $3$-D space.}
\label{fig:topologygraph}
\end{figure}
\subsection{Distance-based formation control problem}
We consider a group of autonomous mobile agents moving in $d$-dimensional Euclidean space ($d=2,3$). Assume that each agent obeys a single integrator dynamics of the form: 
\begin{equation}
{{\dot {\textbf{p}}}_i} = {\textbf{u}_i},
\label{eq:system}
\end{equation}
where $\textbf{u}_i\in {\mathbb{R}^d}$ is agent $i$'s control input. Let $\mathcal{D} = \left \{ \bar d_{ij}: (i,j) \in \mathcal{E} \right \}$ be the set of desired distances between neighboring agents and assume that $\mathcal{D}$ is feasible, which means if $(i,j),(j,k),(k,i) \in \mathcal{E}$ then
\begin{equation}
d_{ij}+d_{jk}>d_{ki},d_{jk}+d_{ki}>d_{ij},d_{ki}+d_{ij}>d_{jk}
\label{eq:relizable_ineq}
\end{equation} 
for all $i,j,k \in \mathcal{V}$. Define the desired formation set as
\begin{equation}
\mathcal{Q}_C = \left \{ \textbf{p} \in \mathbb{R}^{(N+1)d}: \dot{\textbf{p}} = 0 ; ||\textbf{z}_{ij}||= \bar d_{ij}, \forall (i,j) \in \mathcal{E} \right \}
\label{eq:desired_formation}
\end{equation} 
where $||.||$ is the Euclidean norm.
Denote the set of neighbours of agent $i$ by $\mathcal{N}_i$ and assume that each agent $i$ can only measure the relative position of its neighbours in its own coordinate system, $\textbf{p}_j^i, j \in \mathcal{N}_i$.
The main task of distance-based formation control can be summarized as follow: 
\begin{Problem}
For a given system of single-integrator modelled agents (\ref{eq:system}) moving in the $d$-dimensional space ($d=2,3$), design a distributed control law, for which each agent $i$ uses only distance measurement $\textbf{p}_j^i, j \in \mathcal{N}_i$, such that the formation of the system converges to a desired formation in $\mathcal{Q}_C$.
\label{p:fc_si}
\end{Problem}
\subsection{The gradient control law}
Let $\phi(x,\bar d)$ be a function that satisfies the following assumption:
\begin{Assumption}
If $\bar d$ is a constant, then
\begin{itemize}
\item $\phi(x,\bar d)$ is non-negative and $g(x,\bar d):= \frac{\partial \phi(x,\bar d)}{\partial x}$ is strictly monotonically increasing,
\item $\phi(x,\bar d)$, $g(x,\bar d)$ are continuously differentiable on $x \in (-{\bar d}^2, \infty)$ and equal zero if and only if $x=0$,
\item $\phi(x,\bar d)$ is analytic in a neighbourhood of 0.
\end{itemize}
\label{as:potentialfunction}
\end{Assumption}
Let $e_{\epsilon_k}=e_{ij}=||\textbf{z}_{ij}||^2-\bar d_{ij}^2$ be the squared distance error for edge $\epsilon_k = (i,j) \in \mathcal{E}$, and $g_{\epsilon_k} = g_{ij} = g(e_{ij},\bar{d}_{ij})$.
Let us define a local potential for each agent $V_i: \mathbb{R}^{d(|\mathcal{N}_i|+1)} \to \mathbb{R}_+$
\begin{equation}
V_i({\textbf{p}_i^i},...,{\textbf{p}_j^i},...) = \frac{1}{2} \sum\limits_{j \in \mathcal{N}_i} {\phi (e_{ij}, \bar{d}_{ij})} 
\label{eq:potential_function_local}
\end{equation}
and a global potential function for system as
\begin{equation}
V = \sum\limits_{i \in \mathcal{V}} {V_i} = \frac{1}{2} \sum\limits_{i \in \mathcal{V}} {\sum\limits_{j \in \mathcal{N}_i} {\phi (e_{ij}, \bar{d}_{ij})}}
\label{eq:potential_function_global}
\end{equation} 
From the potential function\footnote{There are a lot of potential functions satisfying Assumption \ref{as:potentialfunction} which are widely used in the literature. 
For example, they are $\phi(e_{\epsilon_k},\bar{d}_{\epsilon_k})=e_{\epsilon_k}^2$ and $g_{\epsilon_k} = e_{\epsilon_k}$ used in  \cite{Krick2009,Kwangkyo2014}, and $\phi(e_{\epsilon_k},\bar{d}_{\epsilon_k})=\frac{e_{\epsilon_k}^2}{{\epsilon_k}+\bar{d}_{\epsilon_k}^2}$ and $g_{\epsilon_k} = 1-\frac{d_{\epsilon_k}^4}{{(\epsilon_k}+\bar{d}_{\epsilon_k}^2)^2}$ used in \cite{Dimarogonas2010,Tian2013}.}, we can define the gradient-descent control law for agents:
\begin{equation}
\dot {\textbf{p}} = \textbf{u} = - \nabla_{\textbf{p}} V(\textbf{p})
\label{eq:gradient_controllaw_global}
\end{equation}
where $\textbf{u} = {\left[ {\begin{array}{*{20}{c}}{\textbf{u}_1^T}&{\textbf{u}_2^T}&{...}&{\textbf{u}_N^T}&{\textbf{u}_{N+1}^T}\end{array}} \right]^T} \in {\mathbb{R}^{\left( N+1 \right)d}}$ is the control input vector.
The detailed control law for each agent:
\begin{equation}
\dot {\textbf{p}}_i = \textbf{u}_i = - \nabla_{\textbf{p}_i} V(\textbf{p}) = - \sum\limits_{j \in \mathcal{N}_i} {g_{ij} \textbf{z}_{ij}} 
\label{eq:gradient_controllaw_local}
\end{equation}
\begin{Remark}
We can see $||\mathbf{z}_{ij}||=||\mathbf{p}_j^i||$. Let $\mathbf{Q}^i$ be the rotation matrix of global coordinate system with respect to agent $i$'s coordinate system. Then the control input of agent $i$ in its own coordinate system is given as $\textbf{u}_i^i = \mathbf{Q}^i \textbf{u}_i = -g_{ij} \mathbf{Q}^i \mathbf{z}_{ij}=-\sum\limits_{j \in \mathcal{N}_i}{g_{ij}\mathbf{p}_j^i}$. So, the control law (\ref{eq:gradient_controllaw_local}) does not require that agents' coordinate systems are aligned. 
\end{Remark}

Denote the equilibrium set
\begin{equation}
\mathcal{Q} = \{ {\textbf{p}: \dot{\textbf{p}} = 0 \textrm{ and } \sum\limits_{j \in \mathcal{N}_i} {g_{ij} \textbf{z}_{ij} = 0}, \forall i \in \mathcal{V}} \}.
\label{eq:EqSet}
\end{equation} 
By the similar proof in \cite{Kwangkyo2014}, we have
\begin{Theorem}
For a given system (\ref{eq:system}) with the interaction graph $\cal{G}$, under control law (\ref{eq:gradient_controllaw_global})
\begin{itemize}
\item  $p(t)$ approaches $\mathcal{Q}$ as $t \to \infty $.
\item  The desired formation $\mathcal{Q}_C$ is locally asymptotically stable.
\end{itemize}
\label{theorem:EqSet}
\end{Theorem}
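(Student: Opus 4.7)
The plan is to use the global potential $V(\textbf{p})$ from equation (\ref{eq:potential_function_global}) as a Lyapunov function and invoke LaSalle's invariance principle for the first claim, then use a local positive-definiteness argument near $\mathcal{Q}_C$ for the second.

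First I would compute $\dot V$ along trajectories of the closed-loop system (\ref{eq:gradient_controllaw_global}). By Assumption \ref{as:potentialfunction}, $\phi \ge 0$ so $V \ge 0$, and by construction
\[
\dot V = (\nabla_\textbf{p} V)^{\!\top}\dot{\textbf{p}} = -\|\nabla_\textbf{p} V(\textbf{p})\|^2 \le 0.
\]
Hence, for any initial condition $\textbf{p}(0)$, the sublevel set $\Omega_c = \{\textbf{p} : V(\textbf{p}) \le V(\textbf{p}(0))\}$ is positively invariant. A key preliminary step is showing that each $\Omega_c$ is bounded modulo translations: since each $\phi(e_{ij},\bar d_{ij})$ grows with $|e_{ij}|$ (by strict monotonicity of $g$), boundedness of $V$ bounds all edge lengths $\|\textbf{z}_{ij}\|$, hence the relative positions $\textbf{z} = \bar{\textbf{B}}^{\!\top}\textbf{p}$ lie in a compact set. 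Since the dynamics is translation-invariant, working in the relative-position coordinates, LaSalle's invariance principle applied to $V$ yields convergence of $\textbf{z}(t)$ to the largest invariant set in $\{\dot V = 0\}$. From the expression $\dot V = -\|\nabla_\textbf{p}V\|^2$ and the per-agent identity $\nabla_{\textbf{p}_i}V = \sum_{j\in \mathcal{N}_i}g_{ij}\textbf{z}_{ij}$, the zero set of $\dot V$ coincides exactly with the equilibrium set $\mathcal{Q}$ defined in (\ref{eq:EqSet}). This proves the first item.

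For the second item, I would argue that $V$ is locally positive definite with respect to $\mathcal{Q}_C$. On $\mathcal{Q}_C$ every $e_{ij}=0$, so $V = 0$ there. Conversely, by the second bullet of Assumption \ref{as:potentialfunction}, $\phi(e_{ij},\bar d_{ij}) = 0$ iff $e_{ij}=0$, so $V(\textbf{p})=0$ iff $\textbf{p}\in \mathcal{Q}_C$. Define the set distance $\rho(\textbf{p},\mathcal{Q}_C) = \inf_{\textbf{q}\in \mathcal{Q}_C}\|\textbf{p}-\textbf{q}\|$ (taken modulo translations to account for the invariance of $\mathcal{Q}_C$ under rigid translations). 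Continuity of $V$, the fact that $\mathcal{Q}_C$ is closed, and compactness of the edge-vector image on bounded level sets give the standard sandwich $\alpha(\rho) \le V \le \beta(\rho)$ with class-$\mathcal{K}$ functions $\alpha,\beta$ on a small tube around $\mathcal{Q}_C$. Combined with $\dot V \le 0$ this establishes Lyapunov stability of $\mathcal{Q}_C$; combined with the LaSalle conclusion that trajectories approach $\mathcal{Q}$ and the fact that in a sufficiently small neighborhood of $\mathcal{Q}_C$ the only equilibria of $\dot{\textbf{p}}=-\nabla V$ are those in $\mathcal{Q}_C$ itself (by continuity of $g_{ij}$ and the rigidity of $\mathcal{G}_r$ on the triangle/tetrahedral sub-formation together with the edge $(N,N+1)$), we conclude local asymptotic stability.

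The step I expect to be most delicate is the last one: verifying that no spurious equilibria of $\mathcal{Q}\setminus \mathcal{Q}_C$ accumulate on $\mathcal{Q}_C$. For a generic rigid graph this requires invoking the infinitesimal rigidity of the graph (the rigidity matrix $\bar{\textbf{B}}\,\mathrm{diag}(\textbf{z}_{\epsilon_k})$ has full column rank on the shape space at points of $\mathcal{Q}_C$), which separates desired equilibria from undesired ones by a positive distance in the shape space. Everything else is routine once $V$ is identified as the Lyapunov function and LaSalle is applied in the translation-reduced coordinates. Because this mirrors the argument in \cite{Kwangkyo2014}, I would state that the details are omitted and refer the reader there for the technical lemmas on rigidity and isolation of the desired equilibrium orbit.
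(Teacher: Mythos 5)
Your overall skeleton --- taking $V$ from (\ref{eq:potential_function_global}) as the Lyapunov function, computing $\dot V=-\|\nabla_{\textbf{p}}V\|^2\le 0$, extracting precompactness of the trajectory in the relative coordinates $\textbf{z}=\bar{\textbf{B}}^T\textbf{p}$ from boundedness of the sublevel sets, invoking LaSalle for convergence to $\mathcal{Q}$, and then a local positive-definiteness argument for $\mathcal{Q}_C$ --- is exactly the route intended here; the paper gives no details at all and simply defers to the proof in \cite{Kwangkyo2014}, so your write-up is already more explicit than the source.

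There is, however, one concrete error in the step you yourself flag as delicate. You justify the isolation of $\mathcal{Q}_C$ from $\mathcal{Q}\setminus\mathcal{Q}_C$ by ``infinitesimal rigidity of the graph,'' asserting that the rigidity matrix has full \emph{column} rank on the shape space at points of $\mathcal{Q}_C$. The graph $\mathcal{G}$ of this paper is deliberately \emph{not} rigid: the flex node $N+1$ has a single incident edge, so it can move freely on the sphere of radius $\bar d_{N,N+1}$ about agent $N$ without violating any constraint. Hence the shape space at a desired configuration has dimension $(N+1)d-d(d+1)/2=m+(d-1)>m$, the rigidity matrix cannot have full column rank there, and $\mathcal{Q}_C$ is not an isolated rigid-motion orbit but a $(d-1)$-parameter family of such orbits. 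What actually carries the argument is full \emph{row} rank of the rigidity matrix, i.e.\ independence of the $m$ edges, which holds because $\mathcal{G}_r$ is rigid (indeed minimally rigid in the triangle and tetrahedron cases) and the extra edge attaches to a brand-new vertex. With $\textbf{R}$ denoting the $m\times(N+1)d$ rigidity matrix, the squared-error dynamics read $\dot{\textbf{e}}=-2\textbf{R}\textbf{R}^T\textbf{g}(\textbf{e})$, and $\textbf{R}\textbf{R}^T\succ 0$ near $\mathcal{Q}_C$ together with $\rho_{\epsilon_k}>0$ gives local exponential stability of $\textbf{e}=0$, hence local asymptotic stability of $\mathcal{Q}_C$ \emph{as a set}. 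Your class-$\mathcal{K}$ sandwich $\alpha(\rho)\le V\le\beta(\rho)$ must likewise be taken with respect to the distance to the whole set $\mathcal{Q}_C$ (modulo translations the set is compact, so this is fine), and it too rests on the row-rank/independence property rather than on rigidity. With that substitution your proof is sound and matches the cited argument.
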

\begin{Remark}
Under the control law (\ref{eq:gradient_controllaw_global}), $\dot{\textbf{p}} = 0$ implies $\sum\limits_{j \in \mathcal{N}_i} {g_{ij} \textbf{z}_{ij} = 0, \forall i \in \mathcal{V}}$ and $\mathcal{Q}_C = \left\{ {\textbf{p}:{g_{ij} = 0}, \forall (i,j) \in \mathcal{E}} \right\}$.\\
The control law (\ref{eq:gradient_controllaw_global}) does not guarantee the global convergence to desired formation. There also exist undesired equilibria as 
\begin{equation}
{\mathcal{Q}_I} =\mathcal{Q} \setminus \mathcal{Q}_C = \left\{ {\textbf{p} \in Q: \exists (i,j) \in \mathcal{E}, g_{ij} \neq 0 } \right\}
\end{equation} 
\end{Remark} 
\section{Stability analysis of undesired equilibrium}
\subsection{Hessian of potential function}
The Jacobian $\textbf{J}(\textbf{p})$ of the right-hand side of (\ref{eq:gradient_controllaw_global}) is same as the negative Hessian of the potential function $V$. Let $\textbf{p}^* \in \mathcal{Q}_I$ be an undesired equilibrium point; then $\textbf{p}^*$ is stable if and only if all eigenvalues of $\textbf{J}(\textbf{p}^*)$ are not positive or all eigenvalues of $\textbf{H}_{\textbf{V}}(\textbf{p}^*)$ are not negative, i.e. the Hessian $\textbf{H}_{\textbf{V}}(\textbf{p}^*)$ is positive semidefinite.  
We have 
\[ \frac{\partial V}{\partial {\textbf{p}}^T_j} = \frac{1}{2}\sum\limits_{k=1}^{m} {\left( \frac{\partial V}{\partial e_{\epsilon_k}} \frac{\partial e_{\epsilon_k}}{\partial {\textbf{p}}_j} \right) ^T } = \sum\limits_{k=1}^{m} {{ g_{\epsilon_k} {\textbf{z}}_{\epsilon_k}^T } \frac{\partial {\textbf{z}}_{\epsilon_k}}{\partial {\textbf{p}}_j}}\]
From definition of ${\textbf{z}}_{\epsilon_k}$, we can see $\frac{\partial^2 {\textbf{z}}_{\epsilon_k}}{\partial {\textbf{p}}_i \partial {\textbf{p}}^T_j}=0 \textrm{ } \forall i,j \in \mathcal{V}$; so
\begin{align}
 \frac{\partial^2 V}{\partial {\textbf{p}}_i \partial {\textbf{p}}^T_j} &=  \sum\limits_{k=1}^{m} { \left( \frac{\partial g_{\epsilon_k}}{\partial e_{\epsilon_k}} \frac{\partial e_{\epsilon_k}}{\partial {\textbf{p}}_i} {\textbf{z}}_{\epsilon_k}^T + g_{\epsilon_k} \frac{\partial {\textbf{z}}_{\epsilon_k}^T}{\partial {\textbf{p}}_i} \right)} \frac{\partial {\textbf{z}}_{\epsilon_k}}{\partial {\textbf{p}}_j} \nonumber\\
  &= \sum\limits_{k=1}^{m} { \frac{\partial {\textbf{z}}_{\epsilon_k}^T}{\partial {\textbf{p}}_i} \left(2 \frac{ \partial g_{\epsilon_k}}{\partial e_{\epsilon_k}} {\textbf{z}}_{\epsilon_k} {\textbf{z}}_{\epsilon_k}^T + g_{\epsilon_k} \otimes \textbf{I}_d \right) \frac{\partial {\textbf{z}}_{\epsilon_k}}{\partial {\textbf{p}}_j}}
\end{align}
which can be further compactly written as 
\begin{equation}
\textbf{H}_{\textbf{V}} = \bar{\textbf{B}} \textbf{M} \bar{\textbf{B}}^T.
\label{eq:HessianMatrix}
\end{equation}
with $\textbf{M}:=diag \left( 2 \frac{ \partial g_{\epsilon_k}}{\partial e_{\epsilon_k}} \textbf{z}_{\epsilon_k} \textbf{z}_{\epsilon_k}^T + g_{\epsilon_k} \otimes \textbf{I}_d \right) \in \mathbb{R}^{m \times m}$. We can see that the sum of elements in one column or one row is zero.
Next, we examine the stability of undesired equilibria in two specific cases: 
\begin{itemize}
\item A triangle adding a flex node in the plane.
\item A tetrahedron adding a flex node in $3$-D space.
\end{itemize}
For convenience, we define $\textbf{E} = \textbf{B} diag \left( g_{\epsilon_k} \right) \textbf{B}^T$, $\rho_{\epsilon_k} = \frac{ \partial g_{\epsilon_k}}{\partial e_{\epsilon_k}}$ and $\textbf{R}_{[i]} = \textbf{B} [\textbf{z}_{\epsilon_1[i]} \sqrt{\rho_{\epsilon_1}},...,\textbf{z}_{\epsilon_m[i]} \sqrt{\rho_{\epsilon_m}}]^T, [i]=x,y,z$. From Assumption \ref{as:potentialfunction}, $\rho_{\epsilon_k}>0$, and $g_{\epsilon_k}>0$ (respectively $<,=$) if $e_{\epsilon_k}>0$ (respectively $<,=$) or $||\textbf{z}_{\epsilon_k}||>\bar{d}_{\epsilon_k}$ (respectively $<,=$). 
\subsection{Triangle adding a flex node in the plane}
We use a column-reordering transformation $\textbf{T}$ such that $\textbf{R} \textbf{T} = \left[ {\begin{array}{*{20}{c}}{{\textbf{R}_x}}&{{\textbf{R}_y}} \end{array}} \right]$. The transformed Hessian matrix is given by
\begin{equation}
\textbf{H} = {\textbf{T}^T}{\textbf{H}}_{\textbf{V}}\textbf{T} = \left[ {\begin{array}{*{20}{c}}
{2\textbf{R}_x^T{\textbf{R}_x} + \textbf{E}}&{2\textbf{R}_x^T{\textbf{R}_y}}\\
{2\textbf{R}_y^T{\textbf{R}_x}}&{2\textbf{R}_y^T{\textbf{R}_y} + \textbf{E}}
\end{array}} \right]
\label{eq:Hessian2D}
\end{equation}
Since $\textbf{T}$ is orthogonal, i.e., $\textbf{T}^T=\textbf{T}^{-1}$, the eigenvalues of $\textbf{H}$ and $\textbf{H}_\textbf{V}$ are same. Denote $\textbf{H}_{22} (\textbf{p}^*)=2\textbf{R}_y^T(\textbf{p}^*){\textbf{R}_y}(\textbf{p}^*) + \textbf{E}(\textbf{p}^*)$ where $\textbf{p}^*$ is an undesired equilibrium. Consider the vector $\textbf{u}=\left[ {\begin{array}{*{20}{c}}{{\textbf{0}}}&{{\textbf{v}}} \end{array}} \right]^T$ where $\textbf{0}$ is the vector which has the same size as the vector $\textbf{v}$ but all its elements being zero. In what follows, we will show that 
there exists a vector $\textbf{v}$ such that $\textbf{u}^T\textbf{H}(\textbf{p}^*)\textbf{u}=\textbf{v}^T\textbf{H}_{22}(\textbf{p}^*)\textbf{v}<0$, which implies that $\textbf{H}(\textbf{p}^*)$ and $\textbf{H}_\textbf{V}(\textbf{p}^*)$ are not positive semidefinite. 
\begin{Lemma}
Let $\textbf{p}^*$ be an equilibrium in $\mathcal{Q}_I$. If there exists at least a vector $\textbf{v}$ such that $\textbf{v}^T\textbf{H}_{22}(\textbf{p}^*)\textbf{v}<0$ where $\textbf{H}_{22} (\textbf{p}^*)=2\textbf{R}_y^T(\textbf{p}^*){\textbf{R}_y}(\textbf{p}^*) + \textbf{E}(\textbf{p}^*)$, then $\textbf{p}^*$ is unstable.
\label{lemma:Hessian2D}
\end{Lemma}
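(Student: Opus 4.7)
The plan is to exploit the block structure of $\textbf{H}$ to exhibit a negative direction for the Hessian of $V$ at $\textbf{p}^*$, and then invoke Lyapunov's indirect method.

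First I would take the test vector $\textbf{u} = \bigl[\textbf{0}^T, \textbf{v}^T\bigr]^T$ given in the excerpt and expand the quadratic form against the block matrix in \eqref{eq:Hessian2D}. Multiplying out the block product, the cross terms annihilate because the upper block of $\textbf{u}$ is zero, and the only surviving contribution is
\[
\textbf{u}^T \textbf{H}(\textbf{p}^*) \textbf{u} \;=\; \textbf{v}^T \bigl(2\textbf{R}_y^T(\textbf{p}^*)\textbf{R}_y(\textbf{p}^*) + \textbf{E}(\textbf{p}^*)\bigr)\textbf{v} \;=\; \textbf{v}^T \textbf{H}_{22}(\textbf{p}^*) \textbf{v},
\]
which is negative by hypothesis. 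Since a symmetric matrix is positive semidefinite if and only if its quadratic form is nonnegative on all vectors, $\textbf{H}(\textbf{p}^*)$ is not positive semidefinite, and by the Rayleigh quotient its smallest eigenvalue is strictly negative.

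Next I would pass this conclusion back to $\textbf{H}_\textbf{V}(\textbf{p}^*)$. Because $\textbf{T}$ is orthogonal, $\textbf{H}(\textbf{p}^*) = \textbf{T}^T \textbf{H}_\textbf{V}(\textbf{p}^*) \textbf{T}$ is a similarity transform, so $\textbf{H}_\textbf{V}(\textbf{p}^*)$ has the same spectrum as $\textbf{H}(\textbf{p}^*)$ and therefore also admits a strictly negative eigenvalue. Recalling from the setup preceding \eqref{eq:HessianMatrix} that the Jacobian of the closed-loop dynamics \eqref{eq:gradient_controllaw_global} is $\textbf{J}(\textbf{p}^*) = -\textbf{H}_\textbf{V}(\textbf{p}^*)$, this Jacobian possesses an eigenvalue with strictly positive real part.

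Finally, I would invoke Lyapunov's indirect (first) method: the existence of an eigenvalue of $\textbf{J}(\textbf{p}^*)$ with positive real part is sufficient for the equilibrium $\textbf{p}^*$ to be unstable, regardless of the zero eigenvalues that $\textbf{H}_\textbf{V}(\textbf{p}^*)$ inherits from the translational and rotational symmetries of $V$. The only mildly delicate point is this last step, since the presence of zero eigenvalues means the linearization is not hyperbolic; however, a single positive-real-part eigenvalue already yields an unstable manifold through $\textbf{p}^*$, so no further center-manifold analysis is required. This completes the proof.
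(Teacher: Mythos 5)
Your proposal is correct and follows essentially the same route the paper takes (the paper folds this argument into the text preceding the lemma): restrict the quadratic form of $\textbf{H}$ to vectors of the form $[\textbf{0}^T,\textbf{v}^T]^T$ to isolate $\textbf{H}_{22}$, conclude $\textbf{H}_\textbf{V}(\textbf{p}^*)$ is not positive semidefinite via the orthogonal congruence by $\textbf{T}$, and deduce instability from the positive eigenvalue of $\textbf{J}(\textbf{p}^*)=-\textbf{H}_\textbf{V}(\textbf{p}^*)$. Your explicit remark that one eigenvalue with positive real part suffices for instability despite the zero eigenvalues from translational/rotational invariance is a welcome bit of rigor the paper leaves implicit.
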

\begin{figure}[htb]
\begin{center}
\includegraphics[width = 2 in]{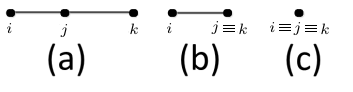}
\end{center}
\caption{Formations of three agents in undesired equilibria.}
\label{fig:incorrect2d}
\end{figure}
For the formation of triangle adding a flex node (Fig.\ref{fig:topologygraph}a), the undesired equilibrium set can be divided as ${\mathcal{Q}_I} = \mathcal{Q}_{I_1} \cup \mathcal{Q}_{I_2}$ where
\[\begin{array}{l}
\mathcal{Q}_{I_1} = \{ \textbf{p} \in \mathcal{Q}_I:{\textbf{z}_{34}} = 0\}, \\
\mathcal{Q}_{I_2} = \{ \textbf{p} \in \mathcal{Q}_I: \textrm{agents 1, 2, 3 are collinear}.\} 
\end{array}\]
Note that $\mathcal{Q}_{I_2}$ contains equilibria where agents are distinct and collinear (Fig. \ref{fig:incorrect2d}a), or there is a pair of agents that have the same position and a remaining agent that reaches desired distances from two others (Fig. \ref{fig:incorrect2d}b), or three agents are on the same position (Fig. \ref{fig:incorrect2d}c).
\begin{Lemma}
If $\textbf{p}^* \in \mathcal{Q}_{I_2}$, the formation of three agents $1,2,3$ has one of forms described in Fig. \ref{fig:incorrect2d}. The properties corresponding to each form are as follows
\begin{itemize}
\item Fig. \ref{fig:incorrect2d}a: $g_{ij}<0$, $g_{jk}<0$, $g_{ik}>0$ and $g_{ij}+g_{ik}<0$, $g_{jk}+g_{ik}<0$.
\item Fig. \ref{fig:incorrect2d}b: $g_{jk}<0$ and $g_{ij}=g_{ik}=0$.
\item Fig. \ref{fig:incorrect2d}c: $g_{ij}<0$, $g_{jk}<0$, $g_{ik}<0$.
\end{itemize}
\label{lemma:Incorrect2D}
\end{Lemma}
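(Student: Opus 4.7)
The plan is to reduce the equilibrium conditions at agents $1,2,3$ to scalar relations along the line of collinearity and read off the signs of the three $g$'s case by case.

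First, I would exploit the equilibrium of the flex node $N+1=4$: since agent $3$ is its only neighbor, $g_{34}\textbf{z}_{34}=\textbf{0}$, so agent $3$'s equation collapses to $g_{31}\textbf{z}_{31}+g_{32}\textbf{z}_{32}=\textbf{0}$. Consequently agents $1,2,3$ satisfy the same equilibrium relations as if they formed an isolated triangle, decoupling the subsequent analysis from agent $4$'s location.

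For case (a) (three distinct collinear points), I would relabel $\{1,2,3\}$ as $\{i,j,k\}$ so that $j$ lies strictly between $i$ and $k$, and parameterize $\textbf{p}_i=\textbf{0}$, $\textbf{p}_j=l_1\textbf{e}$, $\textbf{p}_k=(l_1+l_2)\textbf{e}$ with $l_1,l_2>0$ and $\textbf{e}$ a unit vector. The reduced equilibria at $i$ and $j$ become the scalar identities
\[ g_{ij}\,l_1 + g_{ik}(l_1+l_2)=0, \qquad g_{ij}\,l_1 - g_{jk}\,l_2=0, \]
so $g_{ij}$ and $g_{jk}$ share a sign while $g_{ik}$ has the opposite sign. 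The all-zero possibility is ruled out because $g_{ij}=0$ cascades through both relations to force $g_{jk}=g_{ik}=0$, making $(l_1,l_2,l_1+l_2)=(\bar{d}_{ij},\bar{d}_{jk},\bar{d}_{ik})$, which violates the strict triangle inequality in~(\ref{eq:relizable_ineq}). To fix the overall sign, I would assume $g_{ij},g_{jk}>0$ for contradiction: then $l_1>\bar{d}_{ij}$ and $l_2>\bar{d}_{jk}$ give $l_1+l_2>\bar{d}_{ij}+\bar{d}_{jk}>\bar{d}_{ik}$, hence $g_{ik}>0$, contradicting the opposite-sign conclusion. Thus $g_{ij},g_{jk}<0$ and $g_{ik}>0$. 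Substituting $g_{ik}=-g_{ij}l_1/(l_1+l_2)=-g_{jk}l_2/(l_1+l_2)$ into the required sums yields $g_{ij}+g_{ik}=g_{ij}\tfrac{l_2}{l_1+l_2}<0$ and $g_{jk}+g_{ik}=g_{jk}\tfrac{l_1}{l_1+l_2}<0$.

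Cases (b) and (c) are short. In case (b), I would take the coincident pair to be $j,k$ so that $\textbf{z}_{jk}=\textbf{0}$; agent $j$'s equilibrium then reads $g_{ij}\textbf{z}_{ji}=\textbf{0}$, forcing $g_{ij}=0$ because $\textbf{z}_{ji}\neq\textbf{0}$, and agent $i$'s equilibrium then forces $g_{ik}=0$ as well. The sign $g_{jk}<0$ follows from $\|\textbf{z}_{jk}\|=0<\bar{d}_{jk}$ and Assumption~\ref{as:potentialfunction}. In case (c), all three pairwise distances vanish, and the same assumption delivers $g_{ij},g_{jk},g_{ik}<0$ immediately. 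The real obstacle is case (a): uniquely pinning down the admissible sign pattern requires combining both scalar equilibrium equations with the strict triangle inequality~(\ref{eq:relizable_ineq}); cases (b) and (c) are essentially bookkeeping once the flex node has been decoupled by the opening observation.
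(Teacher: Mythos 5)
Your proof is correct and follows essentially the same route as the paper: reduce to the in-line balance equations at the collinear agents, use the strict triangle inequality from (\ref{eq:relizable_ineq}) to exclude the wrong sign pattern (and the all-zero case), and then compute $g_{ij}+g_{ik}$ and $g_{jk}+g_{ik}$ from the ratio of segment lengths. Your explicit decoupling of the flex node at the outset is a detail the paper leaves implicit, but the substance of the argument is the same.
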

\begin{proof}
See Appendix A.
\end{proof}
\begin{Theorem}
For the system (\ref{eq:system}), whose underlying graph is a triangle adding a flex node moving in the plane, and the distance constraints set $D$ being feasible, the desired formation is almost globally asymptotically stable, i.e., $\textbf{p}^* \in \mathcal{Q}_I$ is unstable with respect to the control law (\ref{eq:gradient_controllaw_global}).
\label{theorem:Incorrect2D}
\end{Theorem}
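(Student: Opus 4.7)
The plan is to invoke Lemma \ref{lemma:Hessian2D} and construct, for every $\textbf{p}^* \in \mathcal{Q}_I$, a vector $\textbf{v} \in \mathbb{R}^{N+1}$ such that $\textbf{v}^T \textbf{H}_{22}(\textbf{p}^*)\textbf{v} < 0$. Unpacking the definitions,
\[
\textbf{v}^T \textbf{H}_{22}\textbf{v} \;=\; 2\!\!\sum_{\epsilon_k=(i,j)\in\mathcal{E}_+}\!\!\rho_{\epsilon_k}\,z_{\epsilon_k,y}^2(v_i-v_j)^2 \;+\; \sum_{\epsilon_k=(i,j)\in\mathcal{E}_+}\!\!g_{\epsilon_k}(v_i-v_j)^2,
\]
so the task is to arrange the negative-$g$ structure supplied by Lemma \ref{lemma:Incorrect2D} to dominate the (always nonnegative) first sum. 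The key geometric move is to rotate the global frame so that whenever agents $1,2,3$ are collinear they lie on the $x$-axis; then every triangle edge satisfies $z_{\epsilon_k,y}=0$, and only edge $(3,4)$ can possibly contribute to the first sum.

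I would first treat $\textbf{p}^* \in \mathcal{Q}_{I_1}$, where $\textbf{z}_{34}=0$. Since agent $3$ receives no force from agent $4$, the triangle $(1,2,3)$ must sit at an equilibrium of its own rigid-triangle gradient dynamics; if that triangle is non-degenerate this forces $g_{12}=g_{13}=g_{23}=0$, while Assumption \ref{as:potentialfunction} together with $\|\textbf{z}_{34}\|=0<\bar d_{34}$ yields $g_{34}<0$. Choosing $\textbf{v}=(0,0,0,1)^T$, only edge $(3,4)$ contributes; its first-sum term vanishes because $\textbf{z}_{34}=0$, and its second-sum term equals $g_{34}<0$. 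A degenerate triangle in this case is collinear and so also lies in $\mathcal{Q}_{I_2}$, treated next. For $\textbf{p}^* \in \mathcal{Q}_{I_2}\setminus\mathcal{Q}_{I_1}$, the equilibrium equation for agent $4$ gives $g_{34}\textbf{z}_{34}=0$ with $\textbf{z}_{34}\neq 0$, hence $g_{34}=0$. I then restrict to $\textbf{v}$ with $v_3=v_4$, which annihilates both contributions of edge $(3,4)$; what remains is the triangle form $g_{12}(v_1-v_2)^2+g_{13}(v_1-v_3)^2+g_{23}(v_2-v_3)^2$, which Lemma \ref{lemma:Incorrect2D} renders negative for a suitable $0/1$-valued choice of $(v_1,v_2,v_3)$. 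Concretely, for Fig.~\ref{fig:incorrect2d}a I set the middle agent's coordinate to $0$ and both endpoint coordinates to $1$, so the form equals the sum of the two negative middle $g$'s; for Fig.~\ref{fig:incorrect2d}b I separate the coincident pair by a unit vector to pick off the lone negative $g_{jk}$; for Fig.~\ref{fig:incorrect2d}c, every pairwise $g$ is negative, so any $\textbf{v}$ with a single nonzero entry among the three coincident agents yields two negative terms. In every sub-case $v_4$ is free to match the chosen $v_3$.

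The main obstacle I foresee is purely the bookkeeping: agent $3$ (the one tethered to the flex node) can play any geometric role in each configuration of Lemma \ref{lemma:Incorrect2D}, so I must verify for each placement that the constraint $v_3=v_4$ remains compatible with the $0/1$ pattern isolating the negative $g$'s. Once this is checked, Lemma \ref{lemma:Hessian2D} yields instability of every undesired equilibrium, and combining with Theorem \ref{theorem:EqSet} (convergence of $\textbf{p}(t)$ to $\mathcal{Q}$ and local asymptotic stability of $\mathcal{Q}_C$) delivers almost global asymptotic stability of the desired formation.
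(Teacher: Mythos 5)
Your proposal is correct and follows essentially the same route as the paper: it invokes Lemma \ref{lemma:Hessian2D}, writes $\textbf{v}^T\textbf{H}_{22}\textbf{v}$ as an edge sum, rotates so the collinear agents lie on the $x$-axis, and extracts negativity from $g_{34}<0$ on $\mathcal{Q}_{I_1}$ (your $(0,0,0,1)^T$ is the complement of the paper's $(1,1,1,0)^T$) and from the sign pattern of Lemma \ref{lemma:Incorrect2D} on $\mathcal{Q}_{I_2}$ via $0/1$ indicator vectors with $v_4=v_3$. If anything, your case-by-case choice of indicator on $\mathcal{Q}_{I_2}$ is more careful than the paper's single choice $(1,0,0,0)^T$, which as written can fail when agent $1$ is the isolated agent of Fig.~\ref{fig:incorrect2d}b; also note your $(0,0,0,1)^T$ computation already covers degenerate triangles in $\mathcal{Q}_{I_1}$, so the deferral of that sub-case to $\mathcal{Q}_{I_2}\setminus\mathcal{Q}_{I_1}$ is unnecessary.
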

\begin{proof}
The matrix $\textbf{H}_{22}(\textbf{p}^*)$ is a $4 \times 4$ symmetric matrix, $\textbf{H}_{22}(\textbf{p}^*)=[h_{ij}]_{4 \times 4}$, where $h_{ii} = \sum_{j\in \mathcal{N}_i}{\left( g_{ij}+\rho_{ij}(y_i-y_j)^2 \right)}$, $h_{ij}=-g_{ij}-\rho_{ij}(y_i-y_j)^2$ if $(i,j) \in \mathcal{E}$, $h_{ij}=0$ otherwise.
Consider undesired equilibrium $\textbf{p}^* \in \mathcal{Q}_I$.
\begin{itemize}
\item If $\textbf{p}^* \in \mathcal{Q}_{I_1}$, then $y_3=y_4, e_{34}=-\bar{d}_{34}^2<0$, which implies $g_{34}<0$. Consider the vector $\textbf{v}=\left[ {\begin{array}{*{20}{c}} {1}&{1}&{1}&{0} \end{array}} \right]^T$; then we have $\textbf{v}^T \textbf{H}_{22}(\textbf{p}^*) \textbf{v}=g_{34}<0$.
\item If $\textbf{p}^* \in \mathcal{Q}_{I_2}$, by Lemma \ref{lemma:Incorrect2D}, we have $g_{23}+g_{13}<0$. Without loss of generality, we choose the coordinate system such that agents 1, 2, 3 are on the $x$-axis. 
Consider the vector $\textbf{v}=\left[ {\begin{array}{*{20}{c}} {1}&{0}&{0}&{0} \end{array}} \right]^T$; then we have $\textbf{v}^T \textbf{H}_{22}(\textbf{p}^*) \textbf{v}=g_{23}+g_{23}<0$.
\end{itemize}
From the above analysis, the matrix $\textbf{H}_{22}(\textbf{p}^*)$ is not positive semidefinite for all undesired equilibrium ${\textbf{p}^*}\in \mathcal{Q}_I$; thus, every undesired equilibrium is unstable. Consequently, the desired formation is almost globally asymptotically stable.
\end{proof}

\subsection{Tetrahedron adding a flex node in the $3$-D space}
Similarly to the $2$-D case, we have 
\begin{Lemma} 
Let $\textbf{p}^*$ be an equilibrium in $\mathcal{Q}_I$; if there exists at least a vector $\textbf{v}$ such that $\textbf{v}^T\textbf{H}_{33}(\textbf{p}^*)\textbf{v}<0$ where $\textbf{H}_{33} (\textbf{p}^*)=2\textbf{R}_z^T(\textbf{p}^*){\textbf{R}_z}(\textbf{p}^*) + \textbf{E}(\textbf{p}^*)$, then $\textbf{p}^*$ is unstable.
\label{lemma:Hessian3D}
\end{Lemma}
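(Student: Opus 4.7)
The plan is to mirror the 2-D argument used in Lemma \ref{lemma:Hessian2D}, upgrading the partition from two to three coordinate blocks. First I would introduce a column-reordering permutation $\textbf{T}$ so that stacking by coordinate gives $\textbf{R}\textbf{T}=[\textbf{R}_x\ \textbf{R}_y\ \textbf{R}_z]$. Since $\textbf{T}$ is orthogonal, the similarity $\textbf{T}^T\textbf{H}_V(\textbf{p}^*)\textbf{T}$ has the same spectrum as $\textbf{H}_V(\textbf{p}^*)$, so it suffices to certify that the former fails to be positive semidefinite.

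Next, I would write the transformed Hessian as the $3\times 3$ block matrix
\[
\textbf{T}^T\textbf{H}_V\textbf{T}=\left[\begin{array}{ccc} 2\textbf{R}_x^T\textbf{R}_x+\textbf{E} & 2\textbf{R}_x^T\textbf{R}_y & 2\textbf{R}_x^T\textbf{R}_z \\ 2\textbf{R}_y^T\textbf{R}_x & 2\textbf{R}_y^T\textbf{R}_y+\textbf{E} & 2\textbf{R}_y^T\textbf{R}_z \\ 2\textbf{R}_z^T\textbf{R}_x & 2\textbf{R}_z^T\textbf{R}_y & 2\textbf{R}_z^T\textbf{R}_z+\textbf{E} \end{array}\right].
\]
This block structure follows from the definition of $\textbf{M}$ in \eqref{eq:HessianMatrix}: the rank-one pieces $2\rho_{\epsilon_k}\textbf{z}_{\epsilon_k}\textbf{z}_{\epsilon_k}^T$ distribute into all nine coordinate-pair blocks, while the term $g_{\epsilon_k}\otimes\textbf{I}_3$ contributes one copy of $\textbf{E}$ along each diagonal block. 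The third diagonal block is exactly $\textbf{H}_{33}(\textbf{p}^*)$.

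Then, given any vector $\textbf{v}$ satisfying $\textbf{v}^T\textbf{H}_{33}(\textbf{p}^*)\textbf{v}<0$, I would probe the quadratic form with the test vector $\textbf{u}=[\textbf{0}^T,\textbf{0}^T,\textbf{v}^T]^T$ matched to this block partitioning. Direct multiplication kills the first two block rows and gives $\textbf{u}^T(\textbf{T}^T\textbf{H}_V(\textbf{p}^*)\textbf{T})\textbf{u}=\textbf{v}^T\textbf{H}_{33}(\textbf{p}^*)\textbf{v}<0$, so $\textbf{H}_V(\textbf{p}^*)$ is not positive semidefinite. By the observation made just before Lemma \ref{lemma:Hessian2D}, the Jacobian of the gradient flow equals $-\textbf{H}_V$, hence $\textbf{J}(\textbf{p}^*)$ has at least one strictly positive eigenvalue, and the equilibrium $\textbf{p}^*$ is unstable.

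I do not anticipate a real obstacle here: the argument is a direct coordinate-by-coordinate extension of the planar version. The only step that needs care is verifying that the column permutation $\textbf{T}$ still exists and remains orthogonal when splitting into three coordinate groups, and that the diagonal blocks absorb $\textbf{E}$ in exactly the same way as in the 2-D case. Once that block layout is established, the test-vector trick closes the argument just as before.
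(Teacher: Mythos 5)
Your proposal is correct and follows essentially the same route the paper intends: the paper gives no separate proof for this lemma, stating only that it holds ``similarly to the 2-D case,'' and your argument is exactly that extension --- permute coordinates so the transformed Hessian has $\textbf{H}_{33}$ as a diagonal block, probe with $\textbf{u}=[\textbf{0}^T,\textbf{0}^T,\textbf{v}^T]^T$ to show $\textbf{H}_{\textbf{V}}(\textbf{p}^*)$ is not positive semidefinite, and invoke the instability criterion stated before Lemma \ref{lemma:Hessian2D}. No gaps.
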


For the formation of tetrahedron adding a flex node (Fig.\ref{fig:topologygraph}b), the undesired equilibria set can be divided as ${\mathcal{Q}_I} = \mathcal{Q}_{I_1} \cup \mathcal{Q}_{I_2}$ where
\[\begin{array}{l}
\mathcal{Q}_{I_1} = \{ \textbf{p} \in \mathcal{Q}_I:{\textbf{z}_{45}} = 0\} \\
\mathcal{Q}_{I_2} = \{ \textbf{p} \in \mathcal{Q}_I: \textrm{agents 1, 2, 3, 4 are coplanar}\} 
\end{array}\]
Here, $\mathcal{Q}_{I_2}$ contains equilibria where agents $1,2,3,4$ are in the same planar. The formation can be one of cases shown in Fig. \ref{fig:incorrect3d}.
\begin{figure}[htb]
\begin{center}
\includegraphics[width = 2.5 in]{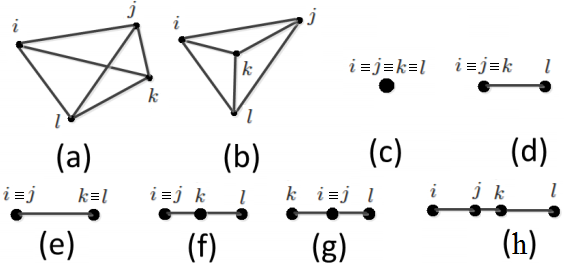}
\end{center}
\caption{Formation of four agents in undesired equilibria}
\label{fig:incorrect3d}
\end{figure}
To analyze the undesired equilibria in $\mathcal{Q}_{I_2}$, we employ the following lemmas:
\begin{Lemma}
(Lemma 5 in \cite{Summers2009}) Denote $\overline {{X}{Y}}$ be the length of edge from node X to node Y and $\angle XYZ$ be the angle between the vector YX and vector YZ. Consider the two triangles $A_1B_1C_1$ and $A_2B_2C_2$. If $\overline {{A_1}{B_1}}<\overline {{A_2}{B_2}}$, $\overline {{A_1}{C_1}}<\overline {{A_2}{C_2}}$, $\overline {{B_1}{C_1}}>\overline {{B_2}{C_2}}$, then $\angle B_1A_1C_1 > \angle B_2A_2C_2$.
\label{Lemma:angle_ineq}
\end{Lemma}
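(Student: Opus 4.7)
My plan is to reduce the angle comparison to a cosine comparison via the law of cosines, then argue by contradiction using an auxiliary triangle and the classical SAS (hinge) inequality. Writing $b_i := \overline{A_iC_i}$, $c_i := \overline{A_iB_i}$, $a_i := \overline{B_iC_i}$, and $\alpha_i := \angle B_iA_iC_i$, the law of cosines gives $\cos\alpha_i = (b_i^2 + c_i^2 - a_i^2)/(2b_ic_i)$, and since $\alpha_i \in (0,\pi)$, the desired conclusion $\alpha_1 > \alpha_2$ is equivalent to $\cos\alpha_1 < \cos\alpha_2$.

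Suppose for contradiction that $\alpha_1 \le \alpha_2$. I would introduce an auxiliary triangle $T^{*}$ having two sides of length $b_1$ and $c_1$ meeting at included angle $\alpha_2$, with third side of length $a^{*}$. Applying the SAS (hinge) inequality to $T_1$ and $T^{*}$ (which share the enclosing side lengths, while $T^{*}$ has the weakly larger included angle $\alpha_2 \ge \alpha_1$) gives $a^{*} \ge a_1$; combined with the hypothesis $a_1 > a_2$, this yields $a^{*} > a_2$.

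To close the contradiction I would then compare $T^{*}$ with $T_2$: both have included apex angle $\alpha_2$, but $T_2$ has strictly larger enclosing sides $b_2 > b_1$ and $c_2 > c_1$. Using the law of cosines in each triangle and regrouping the difference through the factorization $b_2 c_2 - b_1 c_1 = c_2(b_2-b_1) + b_1(c_2-c_1)$, one obtains
\[
a_2^2 - (a^{*})^2 = (b_2 - b_1)\bigl[(b_2 + b_1) - 2 c_2 \cos\alpha_2\bigr] + (c_2 - c_1)\bigl[(c_2 + c_1) - 2 b_1 \cos\alpha_2\bigr],
\]
so it suffices to verify that both bracketed quantities are positive, which would give $a_2 > a^{*}$ and contradict $a^{*} > a_2$.

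The main obstacle will be the acute case $\alpha_2 < \pi/2$: when $\alpha_2 \ge \pi/2$ one has $\cos\alpha_2 \le 0$ and positivity of the brackets is immediate, but for acute $\alpha_2$ the projections $c_2\cos\alpha_2$ and $b_1\cos\alpha_2$ can a priori exceed the relevant averages $(b_1+b_2)/2$ and $(c_1+c_2)/2$, so the sign of each bracket is not automatic. I expect that this will have to be dispatched by invoking the triangle inequalities in $T_2$ to bound the projections, possibly combined with further structural constraints on the configurations (the lemma is applied only to the equilibrium triangles arising in the tetrahedral analysis, which may supply additional control on the angles and edge ratios). An alternative route, which avoids the acute-case subtlety, is to work with the half-angle formula $\tan^2(\alpha_i/2) = \bigl(a_i^2 - (b_i - c_i)^2\bigr)/\bigl((b_i + c_i)^2 - a_i^2\bigr)$ and track how the numerator and denominator respond jointly to the hypothesized perturbations; the denominator strictly decreases from $i=1$ to $i=2$, and one would aim to show the numerator strictly increases under the hypotheses, again with the acute/obtuse distinction being the delicate point.
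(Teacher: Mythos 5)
The paper never proves this lemma --- it is imported verbatim as Lemma 5 of the cited Summers et al.\ reference --- so there is no in-paper argument to measure your attempt against. Your reduction is sound as far as it goes: the law-of-cosines reformulation, the hinge-theorem comparison with the auxiliary triangle $T^{*}$, and the factorization of $a_2^2 - (a^{*})^2$ are all correct, and you have put your finger on exactly the right spot --- the positivity of the two bracketed terms in the acute case is the entire content of the claim.

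Unfortunately that obstacle is not a technicality to be dispatched with the triangle inequality: the lemma as stated is false. Take both apex angles equal to $60^{\circ}$, with triangle $1$ having enclosing sides $b_1 = 1$, $c_1 = 10$ (so $a_1 = \sqrt{91} \approx 9.54$) and triangle $2$ having enclosing sides $b_2 = 2$, $c_2 = 10.1$ (so $a_2 = \sqrt{85.81} \approx 9.26$). Then $b_1 < b_2$, $c_1 < c_2$ and $a_1 > a_2$, yet $\alpha_1 = \alpha_2$; shrinking $\alpha_1$ slightly preserves all three hypotheses while giving $\alpha_1 < \alpha_2$. In your decomposition this is visible as the first bracket, $(b_2 + b_1) - 2c_2\cos\alpha_2 = 3 - 10.1 < 0$, overwhelming the rest. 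Consequently no proof from the stated hypotheses alone can exist; a correct version must carry additional assumptions (for instance, conditions forcing $b_1 + b_2 \ge 2c_2\cos\alpha_2$ and $c_1 + c_2 \ge 2b_1\cos\alpha_2$, or some a priori comparability of the two triangles), and whether the equilibrium configurations to which the lemma is applied in Appendix B actually satisfy such conditions is precisely what would have to be verified. Your half-angle alternative fares no better --- the same counterexample defeats it, and note that the denominator $(b_i + c_i)^2 - a_i^2$ in fact increases from $i=1$ to $i=2$ rather than decreasing, while the numerator $a_i^2 - (b_i - c_i)^2$ is not monotone under the hypotheses.
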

\begin{Lemma}
Let ABCD be a tetrahedral and angles at node A are $\theta_1,\theta_2,\theta_3$  as depicted in Fig. \ref{fig:tetrahedral}. Then $\theta_1+\theta_2+\theta_3<360^o$, $\theta_1+\theta_2>\theta_3,\theta_1+\theta_3>\theta_2$ and $\theta_2+\theta_3>\theta_1$.
\begin{figure}[htb]
\begin{center}
\includegraphics[width = 1.3 in]{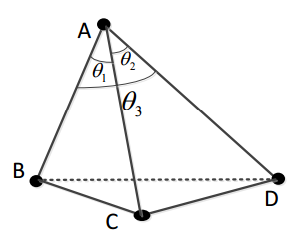}
\end{center}
\caption{A tetrahedron.}
\label{fig:tetrahedral}
\end{figure}
\label{Lemma:tetrahedron}
\end{Lemma}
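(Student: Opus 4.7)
The plan is to reformulate the three face angles at $A$ as spherical arc lengths and then deduce both the triangle-type inequalities and the perimeter bound from the spherical triangle inequality on the unit sphere centered at $A$. Let $\mathbf{a},\mathbf{b},\mathbf{c}$ denote the unit vectors in the directions of $\vec{AB},\vec{AC},\vec{AD}$. Because $ABCD$ is a genuine (non-degenerate) tetrahedron, $\mathbf{a},\mathbf{b},\mathbf{c}$ are linearly independent. With the labeling in Fig.~\ref{fig:tetrahedral} we may take $\theta_1=\angle(\mathbf{a},\mathbf{b})$, $\theta_2=\angle(\mathbf{b},\mathbf{c})$, $\theta_3=\angle(\mathbf{a},\mathbf{c})$, i.e.\ the three pairwise arc lengths of the spherical triangle with vertices $\mathbf{a},\mathbf{b},\mathbf{c}$ on the unit sphere.

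First I would establish the triangle-type inequalities $\theta_i+\theta_j>\theta_k$ by proving the spherical triangle inequality for $\mathbf{a},\mathbf{b},\mathbf{c}$. Writing
\[\mathbf{a}=\cos\theta_1\,\mathbf{b}+\sin\theta_1\,\mathbf{b}^{\perp},\qquad \mathbf{c}=\cos\theta_2\,\mathbf{b}+\sin\theta_2\,\mathbf{b}^{\prime\prime},\]
where $\mathbf{b}^{\perp},\mathbf{b}^{\prime\prime}$ are unit vectors orthogonal to $\mathbf{b}$ lying in the planes spanned by $\{\mathbf{a},\mathbf{b}\}$ and $\{\mathbf{b},\mathbf{c}\}$ respectively, I expand
\[\mathbf{a}\cdot\mathbf{c}=\cos\theta_1\cos\theta_2+\sin\theta_1\sin\theta_2\,(\mathbf{b}^{\perp}\!\cdot\mathbf{b}^{\prime\prime}).\]
Since $\mathbf{b}^{\perp}\!\cdot\mathbf{b}^{\prime\prime}\ge -1$, this yields $\mathbf{a}\cdot\mathbf{c}\ge\cos(\theta_1+\theta_2)$, hence $\theta_3=\arccos(\mathbf{a}\cdot\mathbf{c})\le\theta_1+\theta_2$. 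Equality would force $\mathbf{b}^{\perp}=-\mathbf{b}^{\prime\prime}$, which puts $\mathbf{a},\mathbf{b},\mathbf{c}$ in a common plane and contradicts their linear independence, so the inequality is strict. Permuting the roles of $\mathbf{a},\mathbf{b},\mathbf{c}$ gives the other two strict inequalities.

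Next, for the perimeter bound $\theta_1+\theta_2+\theta_3<360^{\circ}$, I would apply the spherical triangle inequality just proven to the triple $\mathbf{a},\mathbf{b},-\mathbf{c}$. Note $\angle(\mathbf{a},-\mathbf{c})=\pi-\theta_3$ and $\angle(\mathbf{b},-\mathbf{c})=\pi-\theta_2$, and $\mathbf{a},\mathbf{b},-\mathbf{c}$ are again linearly independent. The strict inequality $\angle(\mathbf{a},\mathbf{b})<\angle(\mathbf{a},-\mathbf{c})+\angle(\mathbf{b},-\mathbf{c})$ becomes
\[\theta_1<(\pi-\theta_3)+(\pi-\theta_2),\]
which rearranges to $\theta_1+\theta_2+\theta_3<2\pi=360^{\circ}$.

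The main obstacle is not the algebraic manipulation but the careful handling of strictness: one must justify that $\mathbf{b}^{\perp}\!\cdot\mathbf{b}^{\prime\prime}>-1$ (equivalently, that $\mathbf{a},\mathbf{b},\mathbf{c}$ do not become coplanar with $\mathbf{b}$ lying on the arc between $\mathbf{a}$ and $-\mathbf{c}$, etc.) in every configuration, and analogously for the perimeter step with $-\mathbf{c}$. Both reduce to the affine independence of $A,B,C,D$, which is the defining property of a (non-degenerate) tetrahedron.
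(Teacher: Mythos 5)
The paper does not actually prove Lemma \ref{Lemma:tetrahedron}: it is stated as a known fact about the face angles of a trihedral angle (only Lemmas \ref{lemma:Incorrect2D} and \ref{lemma:Incorrect3D} receive proofs in the appendices, and Lemma \ref{Lemma:angle_ineq} is cited from the literature). Your argument is the standard and correct way to supply the missing proof: the three face angles at $A$ are the side lengths of the spherical triangle cut out by the unit directions $\mathbf{a},\mathbf{b},\mathbf{c}$, the strict spherical triangle inequality gives $\theta_i+\theta_j>\theta_k$, and applying it to $\mathbf{a},\mathbf{b},-\mathbf{c}$ gives the perimeter bound $\theta_1+\theta_2+\theta_3<2\pi$. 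Your handling of strictness via the linear independence of $\mathbf{a},\mathbf{b},\mathbf{c}$ (equivalently, non-degeneracy of $ABCD$) is exactly the right hypothesis to invoke. One small point to make explicit when you write this up: the step from $\mathbf{a}\cdot\mathbf{c}\ge\cos(\theta_1+\theta_2)$ to $\theta_3\le\theta_1+\theta_2$ via $\arccos$ is only valid when $\theta_1+\theta_2\le\pi$, since $\arccos$ takes values in $[0,\pi]$; in the complementary case $\theta_1+\theta_2>\pi$ the desired inequality $\theta_3<\theta_1+\theta_2$ holds trivially because $\theta_3<\pi$ (the vectors $\mathbf{a}$ and $\mathbf{c}$ are not antiparallel). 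You should also note, for the same reason, that $\sin\theta_1,\sin\theta_2>0$, which is what lets you pass from $\mathbf{b}^{\perp}\!\cdot\mathbf{b}^{\prime\prime}\ge-1$ to the cosine bound and from the equality case to coplanarity. With those two sentences added, the proof is complete.
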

Let the tetrahedron $A_1A_2A_3A_4$ correspond to the formation of four agents $1,2,3,4$ in $\mathcal{Q}_C$. We have:
\begin{equation}
\begin{array}{l}
\angle A_kA_iA_j + \angle A_lA_iA_k > \angle A_lA_iA_j,\\
\angle A_jA_kA_i + \angle A_iA_kA_l > \angle A_jA_kA_l
\end{array}
\label{eq:angleineq_1apply}
\end{equation}
\begin{equation}
\angle A_iA_kA_j + \angle A_lA_kA_i  + \angle A_jA_kA_l < 360^o, 
\label{eq:angleineq_2apply}
\end{equation}
and $\overline{A_iA_j}=\bar{d}_{ij}$ for $i,j,k,l \in \{1,2,3,4\}$ and $i,j,k,l$ are distinct.
\begin{Lemma}
If $\textbf{p}^* \in \mathcal{Q}_{I_2}$, the formation of four agents $1,2,3,4$ has one of forms as described in Fig. \ref{fig:incorrect3d}. The properties corresponding to each form are
\begin{itemize}
\item Fig. \ref{fig:incorrect3d}a: $g_{ij}<0, g_{jk}<0, g_{kl}<0, g_{il}<0, g_{ik}>0, g_{jl}>0$ and $g_{ij}+g_{ik}+g_{il}<0$, $g_{ij}+g_{jk}+g_{jl}<0$, $g_{ik}+g_{jk}+g_{kl}<0$, $g_{il}+g_{jl}+g_{kl}<0$.
\item Fig. \ref{fig:incorrect3d}b: $g_{ik}<0,g_{jk}<0,g_{kl}<0$ and $g_{ij}>0,g_{il}>0,g_{jl}>0$.
\item Fig. \ref{fig:incorrect3d}c: $g_{ij}<0$, $g_{ik}<0$, $g_{il}<0$, $g_{jk}<0$, $g_{jl}<0$, $g_{kl}<0$.
\item Fig. \ref{fig:incorrect3d}d: $g_{il}=g_{jl}=g_{kl}=0$, $g_{ij}<0$, $g_{jk}<0$, $g_{ik}<0$.
\item Fig. \ref{fig:incorrect3d}e: $g_{ik}+g_{il}=g_{jk}+g_{jl}=g_{ik}+g_{jk}=g_{il}+g_{jl}=0$, $g_{ij}<0$, $g_{kl}<0$.
\item Fig. \ref{fig:incorrect3d}f: $g_{il}+g_{jl}+g_{kl}<0$ and $g_{ij}+g_{ik}+g_{il}<0$ or $g_{ij}+g_{jk}+g_{jl}<0$.
\item Fig. \ref{fig:incorrect3d}g: $g_{il}+g_{jl}+g_{kl}<0$ and $g_{ik}+g_{jk}+g_{kl}<0$.
\item Fig. \ref{fig:incorrect3d}h: $g_{il}+g_{jl}+g_{kl}<0$ and $g_{ij}+g_{ik}+g_{il}<0$ if $g_{ij}<0$; $g_{ij}+g_{jk}+g_{jl}<0$ if $g_{ij}=0$; or $g_{ik}+g_{jk}+g_{kl}<0$ if $g_{ij}>0$.
\end{itemize}
\label{lemma:Incorrect3D}
\end{Lemma}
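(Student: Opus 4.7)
The plan is to run an exhaustive case analysis over the eight coplanar configurations of agents $1,2,3,4$ listed in Fig.~\ref{fig:incorrect3d}, and for each one establish the claimed sign and sum inequalities on the $g_{ij}$'s by combining three ingredients: (i) the agent-wise equilibrium equations $\sum_{j \in \mathcal{N}_i} g_{ij}\textbf{z}_{ij} = 0$ for $i \in \{1,2,3,4\}$ (the flex node $5$ contributes only to the equation for agent $4$ and, in $\mathcal{Q}_{I_2}$, plays no role in the relative geometry among $\{1,2,3,4\}$); (ii) the sign rule $\textrm{sgn}(g_{ij}) = \textrm{sgn}(\|\textbf{z}_{ij}\| - \bar{d}_{ij})$ from Assumption~\ref{as:potentialfunction}; and (iii) a geometric comparison between the degenerate coplanar configuration and the true tetrahedron $A_1A_2A_3A_4$, carried out through Lemmas~\ref{Lemma:angle_ineq} and \ref{Lemma:tetrahedron} and the derived inequalities (\ref{eq:angleineq_1apply})--(\ref{eq:angleineq_2apply}).

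First I would determine the sign of each individual $g_{ij}$ in the non-degenerate cases (a), (b), (c). In each case, viewing the coplanar quadruple as a ``flattened'' tetrahedron fixes, edge by edge, whether the current distance $\|\textbf{z}_{ij}\|$ exceeds or falls short of $\bar{d}_{ij}$. For (a) (convex quadrilateral), Lemma~\ref{Lemma:angle_ineq} and (\ref{eq:angleineq_2apply}) applied to the angles at each vertex show that the four sides are compressed and one diagonal stretched while the other diagonal is compressed; this gives the six signs directly. Cases (b) and (c) (one vertex strictly inside the triangle of the others, and ``total collapse'', respectively) follow from the same lemmas applied to the appropriate three-angle sum around the interior vertex.

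Next I would derive the sum inequalities such as $g_{ij}+g_{ik}+g_{il}<0$. The key observation is that, when agents $1,2,3,4$ are coplanar, the equilibrium equation $\sum_{j\ne i} g_{ij}\textbf{z}_{ij} = 0$ at an interior or extremal agent $i$ can be projected onto a line $\ell$ chosen so that all three relative vectors $\textbf{z}_{ij},\textbf{z}_{ik},\textbf{z}_{il}$ have components of known sign along $\ell$. Combined with the individual signs obtained in the previous step, this forces the signed sum of the corresponding $g$'s to be strictly negative. In case (e) the pairwise coincidences make $\textbf{z}_{ij}$ collinear with $\textbf{z}_{kl}$, and projecting the equilibrium equations onto a direction transverse to this line immediately produces the cancellation identities $g_{ik}+g_{il}=g_{jk}+g_{jl}=g_{ik}+g_{jk}=g_{il}+g_{jl}=0$; case (d) is similar but with $g_{il}=g_{jl}=g_{kl}=0$ read off from the fact that agent $l$ is already at its correct distance from the collapsed cluster.

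The main obstacle will be the mixed configurations (f)--(h), in which some agents lie on a segment through others and the sign of an individual $g_{ij}$ is not determined by a single angle inequality. Here a plain triangle comparison is insufficient: for each subcase one must decide from the equilibrium projection which of $g_{ij}$ is negative, zero, or positive, and then invoke Lemma~\ref{Lemma:tetrahedron} (via (\ref{eq:angleineq_1apply})--(\ref{eq:angleineq_2apply})) to rule out equality in the resulting sums. I expect this combined projection-plus-angle-comparison argument, case-split according to the sign of the ``borderline'' edge, to be the longest and most delicate part of the proof; with the strict inequalities in hand, however, the remaining cases (d) and (e) fall out from a short calculation using feasibility (\ref{eq:relizable_ineq}) to rule out degenerate distances.
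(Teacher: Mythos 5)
Your overall architecture (case-by-case over Fig.~\ref{fig:incorrect3d}, balance equations plus the sign rule plus angle comparison) matches the paper's, but two steps as you describe them would not go through. In case (a) you assert that flattening the tetrahedron ``fixes, edge by edge'' whether $\|\textbf{z}_{ij}\|$ exceeds $\bar{d}_{ij}$, so that Lemma~\ref{Lemma:angle_ineq} gives the six signs directly. That is false: a coplanar quadruple can be scaled so that all six distances are short, or all long, so the geometry alone determines nothing about the signs. What pins them down is the equilibrium structure: writing the balance of agent $i$ in terms of the intersection point of the two diagonals forces $\operatorname{sgn}(g_{ij})=\operatorname{sgn}(g_{jk})=\operatorname{sgn}(g_{kl})=\operatorname{sgn}(g_{il})=-\operatorname{sgn}(g_{ik})=-\operatorname{sgn}(g_{jl})$, leaving only two candidate sign patterns (plus the all-zero one). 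Only then does Lemma~\ref{Lemma:angle_ineq}, combined with the face-angle inequality (\ref{eq:angleineq_1apply}) at a vertex of the true tetrahedron --- not the $360^o$ bound (\ref{eq:angleineq_2apply}), which is the tool for case (b) --- eliminate the wrong pattern by contradiction. Without the sign-alternation step there is no hypothesis to feed into Lemma~\ref{Lemma:angle_ineq}, so your case (a) and (b) arguments do not get started.

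For the collinear configurations (f)--(h) you plan to invoke Lemma~\ref{Lemma:tetrahedron} via (\ref{eq:angleineq_1apply})--(\ref{eq:angleineq_2apply}), but those angle inequalities say nothing useful when three or four agents lie on a line. The paper instead establishes two elementary auxiliary facts about collinear triples and quadruples (e.g., for collinear $a,b,c$ with $\|\textbf{z}_{ab}\|<\|\textbf{z}_{ac}\|$, feasibility (\ref{eq:relizable_ineq}) together with $g_{ac}\le 0$ and $g_{ab}\ge 0$ forces $g_{bc}<0$; and for a collinear quadruple with $g_{ab}<0$ or $g_{ad}>0$ the projected balance gives $g_{ab}+g_{ac}+g_{ad}<0$), and then runs the sign case-split along the line of collinearity using only the balance equations and these facts. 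You would need to identify and prove such auxiliary statements; your closing remark that cases (d) and (e) ``fall out from feasibility'' also has the roles reversed --- (d) and (e) are the easy coincidence cases handled by the balance equations alone, while (f)--(h) are precisely where feasibility of $\mathcal{D}$ does the real work.
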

\begin{proof}
See Appendix B.
\end{proof}
\begin{Theorem}
For the given system (\ref{eq:system}), whose underlying graph is a tetrahedron adding a flex node moving in the $3$-D space, and the feasible distance constraints set $D$, the desired formation is almost globally asymptotically stable with respect to the control law (\ref{eq:gradient_controllaw_global}).
\label{theorem:Incorrect3D}
\end{Theorem}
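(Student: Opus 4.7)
The plan is to follow the architecture of the $2$-D proof. By Lemma \ref{lemma:Hessian3D} it suffices, for each $\textbf{p}^*\in\mathcal{Q}_I$, to exhibit a vector $\textbf{v}\in\mathbb{R}^{5}$ with $\textbf{v}^T\textbf{H}_{33}(\textbf{p}^*)\textbf{v}<0$. The block $\textbf{H}_{33}$ inherits the same weighted-Laplacian structure used in the $2$-D case: off-diagonal entries $h_{ij}=-\bigl(g_{ij}+2\rho_{ij}(z_i-z_j)^2\bigr)$ for $(i,j)\in\mathcal{E}$ and diagonal entries $h_{ii}=\sum_{j\in\mathcal{N}_i}\bigl(g_{ij}+2\rho_{ij}(z_i-z_j)^2\bigr)$. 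Consequently,
\[
\textbf{v}^T\textbf{H}_{33}(\textbf{p}^*)\textbf{v}=\sum_{(i,j)\in\mathcal{E}}\bigl[g_{ij}+2\rho_{ij}(z_i-z_j)^2\bigr](v_i-v_j)^2,
\]
and the strategy is to pick a convenient $\textbf{v}$ in each part of the partition $\mathcal{Q}_I=\mathcal{Q}_{I_1}\cup\mathcal{Q}_{I_2}$.

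For $\textbf{p}^*\in\mathcal{Q}_{I_1}$, the collision $\textbf{p}_4=\textbf{p}_5$ yields $e_{45}=-\bar d_{45}^2<0$, hence $g_{45}<0$ by Assumption \ref{as:potentialfunction}, while also $(z_4-z_5)^2=0$. Choosing $\textbf{v}=(1,1,1,1,0)^T$ zeros out every tetrahedron-edge contribution and reduces the quadratic form to the single term $g_{45}<0$. For $\textbf{p}^*\in\mathcal{Q}_{I_2}$ I may assume $\textbf{z}_{45}\neq 0$ (otherwise the point is already covered by $\mathcal{Q}_{I_1}$), so the equilibrium condition on agent $5$, namely $g_{45}\textbf{z}_{45}=0$, forces $g_{45}=0$ and $\|\textbf{z}_{45}\|=\bar d_{45}$. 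I then rotate coordinates so that the coplanar agents $1,2,3,4$ lie in the $xy$-plane, i.e.\ $z_1=z_2=z_3=z_4=0$; every tetrahedron edge then carries effective Laplacian weight $g_{ij}$ only, while the flex edge $(4,5)$ carries weight $2\rho_{45}z_5^2\ge 0$.

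The remaining work is a case-by-case sweep through the eight configurations (a)--(h) of Lemma \ref{lemma:Incorrect3D}. In each configuration the quoted inequalities identify at least one rigid-graph node $i^*\in\{1,2,3,4\}$ such that $\sum_{j\in\{1,2,3,4\}\setminus\{i^*\}}g_{i^*j}<0$: configurations (a), (c), (e) offer every node; (b) offers the centroid-type node $k$; (d) offers any of $i,j,k$; and (f), (g), (h) point explicitly at $l$ or at the node named in the bullet. When $i^*\in\{1,2,3\}$ I choose $\textbf{v}=\textbf{e}_{i^*}$; the flex edge contributes $(v_4-v_5)^2=0$, so the quadratic form collapses to $\sum_{j}g_{i^*j}<0$ exactly. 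The genuine obstacle is the case in which the only negative-sum node is the physical agent $4$ (e.g.\ configuration (b) when agent $4$ plays the role of the interior node $k$); there the naive $\textbf{v}=\textbf{e}_4$ picks up the nonnegative residue $2\rho_{45}z_5^2$ along edge $(4,5)$ and cannot be signed. I resolve this by using $\textbf{v}=(1,1,1,0,0)^T$: the identity $v_4=v_5=0$ annihilates the flex-edge contribution, while a direct expansion yields $\textbf{v}^T\textbf{H}_{33}(\textbf{p}^*)\textbf{v}=g_{14}+g_{24}+g_{34}<0$, which is precisely the negative sum at node $4$. With every $\textbf{p}^*\in\mathcal{Q}_I$ thereby shown unstable via Lemma \ref{lemma:Hessian3D}, the local asymptotic stability of $\mathcal{Q}_C$ from Theorem \ref{theorem:EqSet} upgrades to almost global asymptotic stability.
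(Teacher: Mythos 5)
Your proposal is correct and follows essentially the same route as the paper: reduce to the $z$-block $\textbf{H}_{33}$ via Lemma \ref{lemma:Hessian3D}, split $\mathcal{Q}_I$ into the collision case $\mathcal{Q}_{I_1}$ (test vector $(1,1,1,1,0)^T$, leaving only $g_{45}<0$) and the coplanar case $\mathcal{Q}_{I_2}$ (rotate so $z_1=\cdots=z_4=0$, then use Lemma \ref{lemma:Incorrect3D} to find a tetrahedron node whose incident $g$-sum is negative and test with an indicator vector). The one place you genuinely diverge is worth noting: the paper asserts that Lemma \ref{lemma:Incorrect3D} always yields \emph{two} agents with negative incident sums, so that by relabeling one of them can be taken among agents $1,2,3$ and $\textbf{v}=\textbf{e}_1$ suffices. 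That assertion is not actually delivered by the lemma in configuration (b), where only the interior node $k$ is guaranteed a negative sum; if that node happens to be agent $4$, the naive choice $\textbf{v}=\textbf{e}_4$ picks up the unsignable residue $2\rho_{45}z_5^2$ from the flex edge, exactly as you observe. Your complementary indicator $\textbf{v}=(1,1,1,0,0)^T$ kills the flex-edge term while still isolating $g_{14}+g_{24}+g_{34}<0$, so your argument closes a sub-case the paper's proof glosses over. The rest of your case sweep through (a)--(h) matches the lemma's conclusions, and the final upgrade from instability of every undesired equilibrium to almost global asymptotic stability is the same as in the paper.
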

\begin{proof}
Without loss of generality, we choose the coordinate system such that agents $1,2,3$ are in $x-y$ plane. The matrix $\textbf{H}_{33}(\textbf{p}^*)$ is a $5 \times 5$ symmetric matrix, $\textbf{H}_{33}(\textbf{p}^*)=[h_{ij}]_{5 \times 5}$, where $h_{ii} = \sum_{j\in \mathcal{N}_i}{\left( g_{ij}+\rho_{ij}(z_i-z_j)^2 \right)}$, $h_{ij}=-g_{ij}-\rho_{ij}(z_i-z_j)^2$ if $(i,j) \in \mathcal{E}$, and $h_{ij}=0$ otherwise.
\begin{itemize}
\item If $\textbf{p}^* \in \mathcal{Q}_{I_1}$, then $z_4=z_5, g_{45}<0$. Consider the vector $\textbf{v}=\left[ {\begin{array}{*{20}{c}} {1}&{1}&{1}&{1}&{0} \end{array}} \right]^T$; then we have $\textbf{v}^T \textbf{H}_{22}(\textbf{p}^*) \textbf{v}=g_{45}<0$.
\item If $\textbf{p}^* \in \mathcal{Q}_{I_2}$, then $g_{45}=0$ and $z_1=z_2=z_3=z_4=0$. We denote four agents of tetrahedral as $i,j,k,l$. If we omit the rotation and translation motions, at any undesired equilibrium, they will have one of the forms depicted in Fig.\ref{fig:incorrect3d}. From Lemma \ref{lemma:Incorrect3D}, at $\textbf{p}^* \in \mathcal{Q}_{I_2}$, there exist at least two agents $i,j$ such that $g_{ij}+g_{ik}+g_{il}<0$, $g_{ij}+g_{jk}+g_{jl}<0$.
Since agents $1,2,3$ have the same roles, we assume $g_{12}+g_{13}+g_{14}<0$. Consider the vector $\textbf{v}=\left[ {\begin{array}{*{20}{c}} {1}&{0}&{0}&{0}&{0} \end{array}} \right]^T$; then we have $\textbf{v}^T \textbf{H}_{33}(\textbf{p}^*) \textbf{v}=g_{12}+g_{13}+g_{14}<0$.
\end{itemize}
From the above analysis, the matrix $\textbf{H}_{33}(\textbf{p}^*)$ is not positive semidefinite for all undesired equilibrium ${\textbf{p}^*}\in \mathcal{Q}_I$; thus, every undesired equilibrium is unstable. Consequently, the desired formation is almost globally asymptotically stable.
\end{proof}
\section{Formation with additional flex agent moving as leader}
As discussed in the introduction, when a moving rigid formation has a flex node, the added flex node may act as a leader to guide the overall formation to a desired region. In this section, we assume the flex agent has an additional control input $\textbf{v}_f(t)$ satisfying one of the following two assumptions.
\begin{Assumption} 
$\textbf{v}_f(t)$ has the form as 
\[\textbf{v}_f(t)=\left\{ \begin{array}{lcl}
&\textbf{v}(t), \textrm{ } t \in [t_0, T_f], T_f < \infty \\
&0, \textrm{otherwise.}
\end{array} \right.\]
\label{as:flex_law1}
\end{Assumption}
\begin{Assumption}
The flex agent is required to go to a specific point $\textbf{p}_t$ and the additional input  has the form as $\textbf{v}_f(t) = k_f (\textbf{p}_t-\textbf{p}_{N+1})$ where $k_f \in \mathbb{R}^+$.
\label{as:flex_law2}
\end{Assumption} 

The dynamics of system can be written as
\begin{equation}
\dot {\textbf{p}} = \textbf{u} = - \nabla_{\textbf{p}} V(\textbf{p}) + \mathbf{\delta}_{N+1}\otimes\textbf{v}_f(t)
\label{eq:flex_controllaw_global}
\end{equation}
where $\mathbf{\delta} = [0,\ldots ,0, 1]^T   \in \mathbb{R}^{(N+1)}$.
\begin{Theorem}
For the system (\ref{eq:system}) with the interaction graph $\mathcal{G}$ which contains a rigid graph and a flex node addition and has the distance constraints set $D$ being feasible, under control law (\ref{eq:flex_controllaw_global}) where the additional control input $\textbf{v}_f(t)$ satisfies Assumption \ref{as:flex_law1} or Assumption \ref{as:flex_law2}, we have
\begin{itemize}
\item $\textbf{p}(t)$ approaches $\mathcal{Q}$ as $t \to \infty $.
\item The desired formation set $\mathcal{Q}_C$ is locally asymptotically stable.
\item If $\mathcal{G}$ is a triangle adding a flex node in the plane or a tetrahedron adding a flex node in the $3$-D space, $\mathcal{Q}_C$ is almost globally asymptotically stable.
\end{itemize}
\label{theorem:flex_effect}
\end{Theorem}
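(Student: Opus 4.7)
The natural approach is to reduce each assumption to the unperturbed gradient-flow analysis of Sections II and III. For Assumption \ref{as:flex_law1}, the extra input acts only on the compact interval $[t_0,T_f]$, so the trajectory is merely a finite-time perturbation of the closed-loop system (\ref{eq:gradient_controllaw_global}). First I would verify that $\textbf{p}(t)$ stays bounded on $[t_0,T_f]$: the gradient vector field is locally Lipschitz and $\textbf{v}_f(t)$ is bounded on a compact set, so the solution cannot escape in finite time. For $t\geq T_f$ the dynamics coincide exactly with (\ref{eq:gradient_controllaw_global}), and applying Theorem \ref{theorem:EqSet} to the trajectory re-initialised at $\textbf{p}(T_f)$ yields convergence to $\mathcal{Q}$ and local asymptotic stability of $\mathcal{Q}_C$. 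In the two specific topologies, Theorems \ref{theorem:Incorrect2D} and \ref{theorem:Incorrect3D} then give almost global asymptotic stability.

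For Assumption \ref{as:flex_law2}, the key observation is that the additional input is itself a gradient,
\[
k_f(\textbf{p}_t-\textbf{p}_{N+1})=-\nabla_{\textbf{p}_{N+1}}\!\left(\tfrac{k_f}{2}\|\textbf{p}_{N+1}-\textbf{p}_t\|^2\right),
\]
so the closed-loop system can be rewritten as $\dot{\textbf{p}}=-\nabla_{\textbf{p}}\tilde V(\textbf{p})$ with the augmented potential
\[
\tilde V(\textbf{p}):=V(\textbf{p})+\tfrac{k_f}{2}\|\textbf{p}_{N+1}-\textbf{p}_t\|^2.
\]
Since $\tilde V$ is non-negative and non-increasing along solutions, any sublevel set bounds both $\|\textbf{p}_{N+1}-\textbf{p}_t\|$ (via the anchor term) and each $\phi(e_{ij},\bar d_{ij})$ (via the edge terms), which by Assumption \ref{as:potentialfunction} forces each edge error and hence each $\|\textbf{z}_{ij}\|$ to be bounded; connectedness of $\mathcal{G}$ then propagates boundedness of $\textbf{p}_{N+1}$ to the whole configuration. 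LaSalle's invariance principle then gives convergence to $\{\nabla_{\textbf{p}}\tilde V=0\}$, and local asymptotic stability of $\mathcal{Q}_C$ follows by repeating the argument behind Theorem \ref{theorem:EqSet} with $\tilde V$ in place of $V$.

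For the almost-global claim in the triangle and tetrahedron cases, the task reduces to checking that the attraction does not repair the destabilizing directions identified in Theorems \ref{theorem:Incorrect2D} and \ref{theorem:Incorrect3D}. The augmented Hessian splits as
\[
\textbf{H}_{\tilde V}(\textbf{p})=\textbf{H}_{V}(\textbf{p})+k_f\textbf{D},
\]
where $\textbf{D}$ is the positive semidefinite matrix with $\textbf{I}_d$ in the $(N+1)$-th block and zeros elsewhere. Inspecting the witnesses used previously---$[1,1,1,0]^T$ and $[1,0,0,0]^T$ in the triangle case, and $[1,1,1,1,0]^T$ and $[1,0,0,0,0]^T$ in the tetrahedron case---each has a zero entry in the flex agent's slot, so $\textbf{v}^T(k_f\textbf{D})\textbf{v}=0$ and $\textbf{v}^T\textbf{H}_{\tilde V}(\textbf{p}^*)\textbf{v}=\textbf{v}^T\textbf{H}_{V}(\textbf{p}^*)\textbf{v}<0$. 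The reasoning of Lemmas \ref{lemma:Hessian2D} and \ref{lemma:Hessian3D}, applied to $\tilde V$ rather than $V$, therefore still delivers instability of every $\textbf{p}^*\in\mathcal{Q}_I$.

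The main obstacle I anticipate is the coercivity step under Assumption \ref{as:flex_law2}: $V$ by itself is translation- and rotation-invariant, so establishing that sublevel sets of $\tilde V$ are compact requires carefully combining the attraction-based anchoring of $\textbf{p}_{N+1}$ with the boundedness of the edge errors propagated through the connected graph $\mathcal{G}$. Once this boundedness is in place, the gradient/LaSalle argument for convergence and the Hessian test for instability both carry over mechanically from Sections II and III.
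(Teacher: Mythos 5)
Your proposal is correct, and it diverges from the paper's proof in the first case while matching it in the second and third. For Assumption \ref{as:flex_law1} the paper does \emph{not} use your ``wait until $T_f$ and re-initialise'' argument: it keeps the original potential $V$, computes $\dot V = -\|\partial V/\partial \textbf{p}\|^2 - g_f\textbf{z}_f^T\textbf{v}_f(t)$, argues that $V(t)+\int_{t_0}^{\infty} g_f\textbf{z}_f^T\textbf{v}_f\,dt$ is non-increasing with a bounded integral term, and invokes Barbalat. Your finite-time-perturbation reduction is more elementary and avoids having to justify boundedness of that integral; its only prerequisite is the no-finite-escape check on $[t_0,T_f]$, which you correctly flag. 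For Assumption \ref{as:flex_law2} the paper uses essentially your augmented potential ($V_f=V+k_f\|\textbf{p}_t-\textbf{p}_{N+1}\|^2$, missing the factor $\tfrac{1}{2}$ that makes it an exact potential for the closed loop --- your $\tilde V$ is the more careful version) and Barbalat; your coercivity discussion of the sublevel sets of $\tilde V$ actually fills a step the paper glosses over, since Barbalat/LaSalle needs boundedness of the trajectory and $V$ alone is translation-invariant. One small item you should add: to conclude convergence to $\mathcal{Q}$ (and that $\textbf{p}_{N+1}\to\textbf{p}_t$) rather than merely to $\{\nabla\tilde V=0\}$, sum the per-agent balance equations over all agents so the pairwise terms $g_{ij}\textbf{z}_{ij}$ cancel and $k_f(\textbf{p}_t-\textbf{p}_{N+1})=0$ drops out; this is the one extra line the paper supplies. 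Your Hessian argument for the almost-global claim --- $\textbf{H}_{\tilde V}=\textbf{H}_V+k_f\textbf{D}$ with the witness vectors vanishing on the flex agent's slot --- is exactly the paper's.
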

\begin{proof}
Denote $\textbf{z}_f = \textbf{p}_N - \textbf{p}_{N+1}$ and $g_f=g(||\textbf{z}_f||, \bar{d}_f)$. First, we will show that $\textbf{p}(t)$ approaches $\mathcal{Q}$ as $t \to \infty $ in both cases.
\begin{itemize}
\item Case of Assumption \ref{as:flex_law1}: Considering the potential function as (\ref{eq:potential_function_global}), we have:
$\dot{V}=- {\left\| {\frac{{\partial V}}{{\partial \textbf{p}}}} \right\|^2} - g_f\textbf{z}_f^T \textbf{v}_f (t)$
or
$\dot{V} + g_f \textbf{z}_f^T\textbf{v}_f(t)=- {\left\| {\frac{{\partial V}}{{\partial \textbf{p}}}} \right\|^2} \le 0$.
Then, the function $V(t) + \int_{t_0}^{\infty}{g_f \textbf{z}_f^T\textbf{v}_f(t)}$ is non-increasing and $\textbf{v}_f(t)$ is a decay function, i.e. $\int_{t_0}^{\infty}{g_f \textbf{z}_f^T\textbf{v}_f(t)}$ is bounded. So $V$ is bounded. According to Barbalat's lemma \cite[Lemma 8.2]{Khalil2002}, $\dot{V}$ will converge to $0$ or $\textbf{p}(t)$ will converge to $\mathcal{Q}$ as $t \to \infty $.
\item  Case of Assumption \ref{as:flex_law2}: Considering the potential function 
$V_f=V+k_f||\textbf{p}_t-\textbf{p}_{N+1}||^2$, we have
$\dot{V}_f=- \sum\limits_{i=1}^{N}{||\sum\limits_{j \in \mathcal{N}_i} {g_{ij} \textbf{z}_{ij}}||^2} - {|| g_f \textbf{z}_f+k_f (\textbf{p}_t-\textbf{p}_{N+1}) ||^2}.$
Since $\dot{V}_f \le 0$, $V_f$ is bounded and from Barbalat's lemma, $\dot{V}_f$ converges to zero as $t$ goes to infinity. Thus, $\dot{V}_1$ converges to zero, which means $\sum\limits_{j \in \mathcal{N}_i} {g_{ij} \textbf{z}_{ij}} \to 0$ for $i=1,...,N$ and $g_f \textbf{z}_f+k_f (\textbf{p}_t-\textbf{p}_{N+1}) \to 0$. From
\[\sum\limits_{i=1}^{N}{\left(\sum\limits_{j \in \mathcal{N}_i} g_{ij} \textbf{z}_{ij}\right)} = 0,\]
\[g_f \textbf{z}_f+k_f (\textbf{p}_t-\textbf{p}_{N+1}) = 0\]
by summing up the left-hand sides of the equations, we have $k_f (\textbf{p}_t-\textbf{p}_{N+1})=0$. So, $\textbf{p}(t)$ will converge to $\mathcal{Q}$ and $\textbf{p}_{N+1}$ will reach to the position $\textbf{p}_t$ as $t \to \infty $.
\end{itemize}
By the similar proof of Theorem 3.2 in \cite{Kwangkyo2014}, $\mathcal{Q}_C$ is locally asymptotically stable. The negative of derivative of the right-hand side of (\ref{eq:flex_controllaw_global}) is
$-\textbf{J} = \textbf{H}_\textbf{V} + \left[\delta_{N+1}^T\delta_{N+1}\right] \otimes \frac{\partial \textbf{v}_f}{\partial \textbf{p}_{N+1}}$.
When the underlying graph is a triangle adding a flex node in $2$-D (or a tetrahedron adding a flex node in $3$-D), by the similar analysis as the proof of Theorem \ref{theorem:Incorrect2D} (resp. Theorem \ref{theorem:Incorrect3D}), we see that for any undesired equilibrium $\textbf{p}^* \in \mathcal{Q}_I$, there exists a vector $\textbf{v}$ such that $\textbf{v}^T\textbf{H}_\textbf{V}(\textbf{p}^*)\textbf{v}<0$ and the element corresponding to flex agent in the vector $\textbf{v}$ is zero; or $\textbf{v}^T(-\textbf{J}_f)(\textbf{p}^*)\textbf{v}<0$. It implies all undesired equilibria are unstable or the desired formation set is almost globally asymptotically stable.
\end{proof}
\section{Simulations}
In this section, we conduct simulation of formations under the control law (\ref{eq:gradient_controllaw_global}) where $\phi(e_{\epsilon_k},\bar{d}_{\epsilon_k})=\frac{1}{2}e_{\epsilon_k}^2$ and the control law (\ref{eq:flex_controllaw_global}) where $\textbf{v}_f(t)=5(\textbf{p}_t-\textbf{p}_{N+1})$.
\subsection{A triangle adding a flex node in the plane}
We consider the case with desired distances $\bar{d}_{12}=\bar{d}_{23}=\bar{d}_{13}=\bar{d}_{34}=4$ and agents with initial positions $\textbf{p}_1(0) = \left[ {12;2} \right]$, $\textbf{p}_2(0) = \left[ {-12;2} \right]$, $\textbf{p}_3(0) = \left[ {0;-2} \right]$, $\textbf{p}_4(0) = \left[ {0;9.228} \right]$. Simulation results are shown in Fig. \ref{fig:simulation1} (corresponding to control law (\ref{eq:gradient_controllaw_global})) and Fig. \ref{fig:simulation2} (corresponding to control law (\ref{eq:flex_controllaw_global})).
\begin{figure}
\begin{center}
\includegraphics[width = 2.5 in]{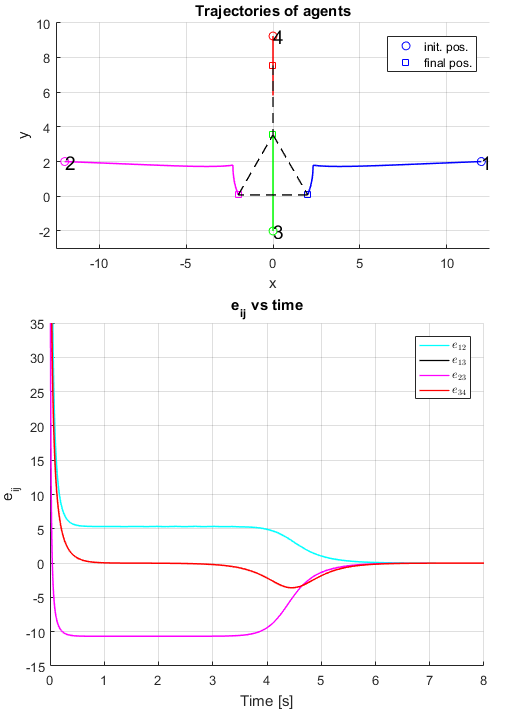}
\end{center}
\caption{At $t=0.3$s, agents 1, 2, 3 are collinear and $\dot{\textbf{p}}_1=\dot{\textbf{p}}_2=\dot{\textbf{p}}_3=\dot{\textbf{p}}_4=0$ and distance between agents 4 and 3 is $\bar{d}_{34}$; at $t=3.1$, agent 4 has a small movement, then the system converges to the desired formation.}
\label{fig:simulation1}
\end{figure}
\begin{figure}
\begin{center}
\includegraphics[width = 2.5 in]{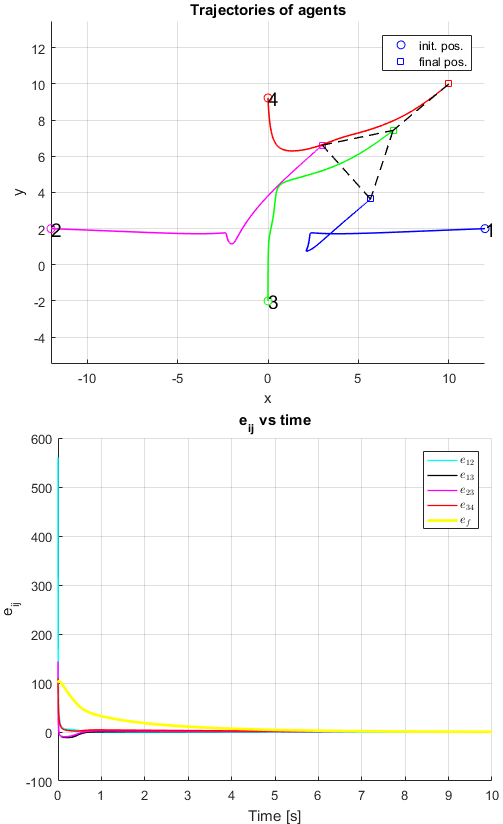}
\end{center}
\caption{The flex agent $4$ converges to the target point $\textbf{p}_t = \left[ {10;10} \right]$ and the square distance error converges to zero}
\label{fig:simulation2}
\end{figure}
\subsection{A tetrahedral adding a flex node in $3$-D space}
We consider the case with desired distances $\bar{d}_{12}=\bar{d}_{23}=\bar{d}_{13}=\bar{d}_{14}=\bar{d}_{24}=\bar{d}_{34}=\bar{d}_{45}=4$ and agents with initial positions $\textbf{p}_1(0) = \left[ \frac{20}{\sqrt{3}};0;0 \right]$, $\textbf{p}_2(0) = \left[ \frac{-10}{\sqrt{3}};10;0 \right]$, $\textbf{p}_3(0) = \left[ \frac{-10}{\sqrt{3}};-10;0 \right]$, $\textbf{p}_4(0) = \left[ 0;0;-3 \right]$, $\textbf{p}_5(0) = \left[ 0;0;6.485 \right]$. Simulation results are shown in Fig. \ref{fig:simulation3} (corresponding to control law (\ref{eq:gradient_controllaw_global})) and Fig. \ref{fig:simulation4} (corresponding to control law (\ref{eq:flex_controllaw_global})). 
\begin{figure}
\begin{center}
\includegraphics[width = 2.5 in]{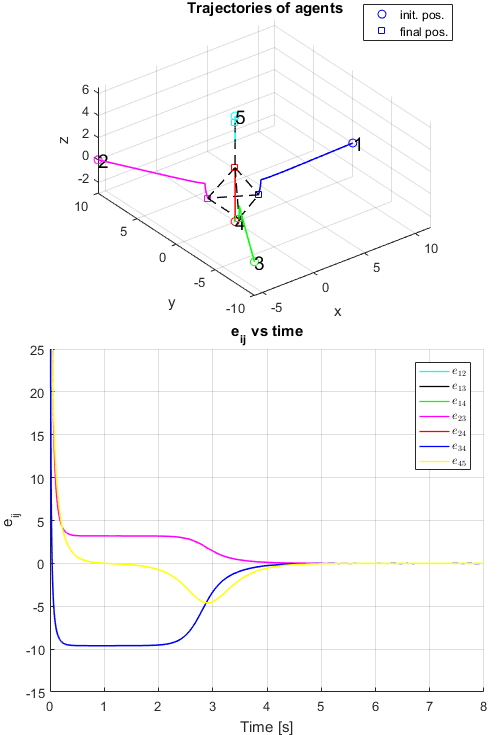}
\end{center}
\caption{At $t=0.8$s, agents 1, 2, 3, 4 are co-planar and $\dot{\textbf{p}}_1=\dot{\textbf{p}}_2=\dot{\textbf{p}}_3=\dot{\textbf{p}}_4=\dot{\textbf{p}}_5=0$ and distance between agents 4 and 5 is $\bar{d}_{45}$; at $t=1.5$, agent 5 has a small movement, then the system converges to the desired formation.}
\label{fig:simulation3}
\end{figure}
\begin{figure}
\begin{center}
\includegraphics[width = 2.5 in]{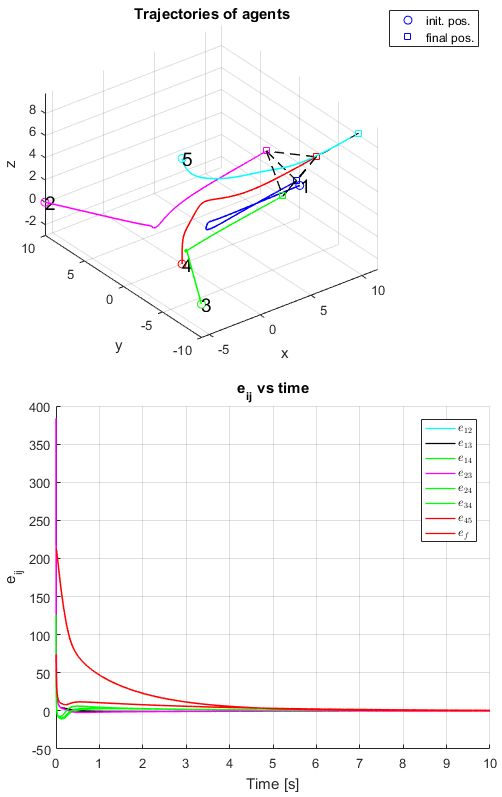}
\end{center}
\caption{The flex agent $5$ converges to target point $\textbf{p}_t = \left[ {10;-10;10} \right]$ and the square distance error converges to zero}
\label{fig:simulation4}
\end{figure}
\section{Conclusion}
In this paper, we studied the distance-based formation control of a group of autonomous agents, whose underlying graph consists of a rigid graph and a flex node addition. Under the gradient control law, the desired formation, where all distance constraints between neighboring agents are achieved, is locally asymptotically stable. We examined stability of undesired equilibrium points with two specific configurations: a triangle adding a flex node in the plane and a tetrahedron adding a flex node in the $3$-D space. We showed that the Hessian of potential function is not positive semi-definite at any undesired equilibrium point; thus, we can conclude that all undesired equilibria are unstable, which means that the desired formation set is almost globally asymptotically stable.
\bibliographystyle{IEEEtran}
\bibliography{mylib}
\appendices
\section{Proof of Lemma \ref{lemma:Incorrect2D}}
\begin{itemize}
\item Case of Fig.\ref{fig:incorrect2d}a: In this case, $||{\textbf{z}}_{ij}|| + ||{\textbf{z}}_{jk}|| = ||{\textbf{z}}_{ik}||$. 
Let us assume ${g}_{ik} < 0$. With the consideration of the balances of agents $i$ and $k$, we obtain ${g}_{ij} > 0$, ${g}_{jk} > 0$. These three signs imply that $\left\| {{{\textbf{z}}_{ik}}} \right\| < \bar{d}_{ik},\left\| {{{\textbf{z}}_{ij}}} \right\| > \bar{d}_{ij},\left\| {{{\textbf{z}}_{jk}}} \right\| > \bar{d}_{jk}$. Then we have $\bar{d}_{ik} > \bar{d}_{ij} + \bar{d}_{jk}$, which contradicts (\ref{eq:relizable_ineq}). Similarly, if ${e}_{ik} = 0$, it implies $\bar{d}_{ik} = \bar{d}_{ij} + \bar{d}_{jk}$, which contradicts (\ref{eq:relizable_ineq}) too.
So, ${g}_{ik} > 0$, ${g}_{ij} < 0$, and ${g}_{jk} < 0$. Since $g_{ij}\textbf{z}_{ij}+g_{ik}\textbf{z}_{ik} = 0$ and $0<||\textbf{z}_{ij}||<||\textbf{z}_{ik}|||$, we have $g_{ij}+g_{ik}=\left(1-\frac{||\textbf{z}_{ij}||}{||\textbf{z}_{ik}||}\right)g_{ij}=\left(1-\frac{||\textbf{z}_{ik}||}{||\textbf{z}_{ij}||}\right)g_{ik}<0$. Similarly, $g_{jk}+g_{ik}<0$.
\item Case of Fig.\ref{fig:incorrect2d}b: In this case, we have $g_{jk}\textbf{z}_{jk}+g_{ik}\textbf{z}_{ik} = 0$ and $\textbf{z}_{jk}=\textbf{0}, ||\textbf{z}_{ik}|| \neq 0$; so $g_{ik}=0$. Similarly, $g_{ij}=0$. Since $e_{jk}=-\bar{d}_{jk}^2<0$, we have $g_{jk}=g(e_{jk},\bar{d}_{jk})<0$.
\item Case of Fig.\ref{fig:incorrect2d}c: From $\textbf{z}_{ij}=\textbf{z}_{jk}=\textbf{z}_{ik}=\textbf{0}$, we have $e_{ij}<0, e_{jk}<0, e_{ik}<0$ or $g_{jk}<0$, $g_{jk}<0$, $g_{ik}<0$.
\end{itemize}
\section{Proof of Lemma \ref{lemma:Incorrect3D}}
\begin{itemize}
\item Case of Fig.\ref{fig:incorrect3d}a: From the balance of agent $i$, we have
$g_{ij}\textbf{z}_{ij}+g_{ik}\textbf{z}_{ik}+g_{il}\textbf{z}_{il}=0$.
Let $\textbf{p}^*_\lambda$ be the intersected point of the line containing agents $i,k$ and the line containing agents $j,l$. Then, we can calculate the coordinate of $\textbf{p}^*_\lambda$ as
$\textbf{p}^*_\lambda=\lambda \textbf{p}^*_j + (1-\lambda)\textbf{p}^*_l, \textrm{  } \lambda \in (0,1)$
and we have $\textbf{z}_{ik}=\alpha(\textbf{p}^*_i-\textbf{p}^*_\lambda)$ for some $\alpha>1$.
Combining above two equations with the balance equation of agent $i$, we can obtain
\[(\alpha \lambda g_{ik} + g_{ij}) \textbf{z}_{ij} + (\alpha(1-\lambda)g_{ik}+g_{il})\textbf{z}_{il}=0\]
Thus, we have 
\begin{subequations}
\begin{align}
\alpha \lambda g_{ik}+g_{ij}=0\\
\alpha(1-\lambda)g_{ik}+g_{il}=0
\end{align}
\label{eq:sign}
\end{subequations}
From (\ref{eq:sign}a), we have $sgn(g_{ij})=-sgn(g_{ik})$, and from (\ref{eq:sign}b), we have $sgn(g_{il})=-sgn(g_{ik})$. Similarly, we can obtain $sgn(g_{ij})=-sgn(g_{jl})$, $sgn(g_{jk})=-sgn(g_{jl})$, $sgn(g_{jk})=-sgn(g_{ik})$, $sgn(g_{kl})=-sgn(g_{ik})$, $sgn(g_{il})=-sgn(g_{jl})$, $sgn(g_{kl})=-sgn(g_{jl})$. Thus, $sgn(g_{ij})=sgn(g_{jk})=sgn(g_{kl})=sgn(g_{il})=-sgn(g_{ik})=-sgn(g_{jl})$. Assume that $g_{ik}<0$, then $g_{jl}<0$, $g_{ij}>0$, $g_{jk}>0$, $g_{kl}>0$, $g_{il}>0$. Consequently, $\left\| {{\textbf{z}}_{ij}} \right\| < {\bar{d}_{ij}},\left\| {{\textbf{z}}_{jk}} \right\| > {d_{jk}},\left\| {{\textbf{z}}_{kl}} \right\| > {\bar{d}_{kl}},\left\| {{\textbf{z}}_{il}} \right\| > {\bar{d}_{il}},\left\| {{\textbf{z}}_{ik}} \right\| < {\bar{d}_{ik}},\left\| {{\textbf{z}}_{jl}} \right\| < {\bar{d}_{jl}}$. 
Apply Lemma \ref{Lemma:angle_ineq} to four pairs of triangles $A_iA_jA_k$ and $ijk$, $A_jA_kA_l$ and $jkl$, $A_kA_lA_i$ and $kli$, $A_lA_iA_j$ and $lij$, then we have:
\begin{equation}
\begin{array}{l}
\angle A_iA_jA_k > \angle ijk, \angle A_jA_kA_l > \angle jkl\\
\angle A_kA_lA_i > \angle kli, \angle A_lA_iA_j > \angle lij
\end{array}
\label{eq:angle_ineq_apply}
\end{equation} 
Since $\angle A_iA_jA_k + \angle A_jA_kA_i + \angle A_k{\rm{A_iA_j}} = \angle {i}{j}{k} + \angle {j}{k}{i} + \angle {k}{i}{j} = {180^o}$ but $\angle A_iA_jA_k > \angle ijk$, we obtain
\begin{equation}
\angle A_jA_kA_i + \angle A_k{\rm{A_iA_j}} < \angle {j}{k}{i} + \angle {k}{i}{j}
\label{eq:angle_ineq_apply_1}
\end{equation}
 Similarly, we have 
 \begin{equation}
\angle A_lA_iA_k + \angle A_l{\rm{A_kA_i}} < \angle {l}{i}{k} + \angle {l}{k}{i}
\label{eq:angle_ineq_apply_2}
\end{equation}
Combining (\ref{eq:angle_ineq_apply_1}) and (\ref{eq:angle_ineq_apply_2}), we have 
$\angle A_jA_kA_i + \angle A_k{\rm{A_iA_j}} + \angle A_lA_iA_k + \angle A_l{\rm{A_kA_i}} < \angle {j}{k}{i} + \angle {k}{i}{j} + \angle {l}{i}{k} + \angle {l}{k}{i}$
or $ \angle A_jA_kA_i + \angle A_k{\rm{A_iA_j}} + \angle A_lA_iA_k + \angle A_l{\rm{A_kA_i}} < \angle {j}{k}{l} + \angle {l}{i}{j}$,
which implies that from (\ref{eq:angle_ineq_apply}) we have $\angle A_jA_kA_i + \angle A_k{\rm{A_iA_j}} + \angle A_lA_iA_k + \angle A_l{\rm{A_kA_i}} < \angle A_jA_kA_l + \angle A_lA_iA_j$. This contradicts (\ref{eq:angleineq_1apply}). Assume $g_{ik}=0$, by similar analysis, we get $\angle A_jA_kA_i + \angle A_k{\rm{A_iA_j}} + \angle A_lA_iA_k + \angle A_l{\rm{A_kA_i}} = \angle A_jA_kA_l + \angle A_lA_iA_j$. This contradicts (\ref{eq:angleineq_1apply}), too. So, $g_{ik}>0$, and it implies $g_{jl}>0$, $g_{ij}<0$, $g_{jk}<0$, $g_{kl}<0$, $g_{li}<0$. 
Let $j',l'$ be the projection of $j,l$ onto edge $(i,j)$, since $ijkl$ is a convex quadrilateral, then $||\textbf{z}_{ij'}||<||\textbf{z}_{ik}||, ||\textbf{z}_{il'}||<||\textbf{z}_{ik}||$. From the balance of agent $i$, we have $g_{ik}||\textbf{z}_{ik}||+g_{il}||\textbf{z}_{il'}||+g_{ij'}||\textbf{z}_{ij'}||=0$, then  $g_{ij}+g_{ik}+g_{il}=\left(1-\frac{||\textbf{z}_{ij'}||}{||\textbf{z}_{ik}||}\right)g_{ij}+\left(1-\frac{||\textbf{z}_{il'}||}{||\textbf{z}_{ik}||}\right)g_{il}<0$. Similarly, we have $g_{ij}+g_{jk}+g_{jl}<0$, $g_{ik}+g_{jk}+g_{kl}<0$, $g_{il}+g_{jl}+g_{kl}<0$.
\item Case of Fig.\ref{fig:incorrect3d}b: From the balance of agent $i$, by following the similar process as above, we have the same equation as (\ref{eq:sign}) with $0<\alpha<1$, and $sgn(g_{ij})=sgn(g_{il})=-sgn(g_{ik})$.
We can obtain $sgn(g_{ij})=sgn(g_{jl})=-sgn(g_{jk})$ and $sgn(g_{jl})=sgn(g_{il})=-sgn(g_{kl})$. Thus, $sgn(g_{ij})=sgn(g_{jl})=sgn(g_{il})=-sgn(g_{ik})=-sgn(g_{jk})=-sgn(g_{kl})$.
Assume that $g_{ik}>0$, then we have $g_{jk}>0$, $g_{kl}>0$, $g_{ij}<0$, $g_{jl}<0$, $g_{il}<0$. Consequently, $\left\| {{\textbf{z}}_{ik}} \right\| > {\bar{d}_{ik}},\left\| {\textbf{z}_{jk}} \right\| > {\bar{d}_{jk}},\left\| {\textbf{z}_{kl}} \right\| > {\bar{d}_{kl}},\left\| {\textbf{z}_{ij}} \right\| < {\bar{d}_{ij}},\left\| {\textbf{z}_{jl}} \right\| < {\bar{d}_{jl}},\left\| {\textbf{z}_{il}} \right\| < {\bar{d}_{il}}$. 
Apply Lemma \ref{Lemma:angle_ineq} to three pairs of triangles $A_iA_kA_j$ and $ikj$, $A_jA_kA_l$ and $jkl$, $A_lA_kA_i$ and $lki$, then we have
$\angle A_iA_kA_j > \angle ikj,
\angle A_jA_kA_l > \angle jkl,
\angle A_lA_kA_i > \angle lki$
or $\angle A_iA_kA_j + \angle A_jA_kA_l + \angle A_lA_kA_i > \angle {i}{k}{j} + \angle {j}{k}{l} + \angle {l}{k}{j} = {360^o}$, which contradicts (\ref{eq:angleineq_2apply}). Similarly with assumption $g_{ik}=0$, we can obtain $\angle A_iA_kA_j + \angle A_jA_kA_l + \angle A_lA_kA_i = \angle {i}{k}{j} + \angle {j}{k}{l} + \angle {l}{k}{j} = {360^o}$, which contradicts (\ref{eq:angleineq_2apply}), too. So, $g_{ik}<0$, $g_{jk}<0$, $g_{kl}<0$, $g_{ij}>0$, $g_{jl}>0$, $g_{il}>0$.
\item Case of Fig.\ref{fig:incorrect3d}c: Four agents $i,j,k,l$ have the same position, we have $e_{ij}=-\bar{d}^2_{ij}<0$ which implies $g_{ij}<0$. Similarly, $g_{ik}<0,g_{il}<0,g_{jk}<0,g_{jl}<0,g_{il}<0$.
\item Case of Fig.\ref{fig:incorrect3d}d: Three agents $i,j,k$ have the same position and $||\textbf{z}_{il}||=||\textbf{z}_{jl}||=||\textbf{z}_{kl}||>0$. From the balance of agents $i,j,k$, we have $g_{il}=g_{jl}=g_{kl}=0$. Since $e_{ij}=-\bar{d}^2_{ij}<0$, we have $g_{ij}<0$. Similarly, $g_{ik}<0,g_{jk}<0$.
\item Case of Fig.\ref{fig:incorrect3d}e: Two pairs of neighboring agents have the same position. Let $\textbf{p}^*_i=\textbf{p}^*_j$ and $\textbf{p}^*_k=\textbf{p}^*_l$. From the balance of agent $i$, we have $g_{ik}(\textbf{p}^*_i-\textbf{p}^*_k)+g_{il}(\textbf{p}^*_i-\textbf{p}^*_l)=(g_{ik}+g_{il})(\textbf{p}^*_i-\textbf{p}^*_l)=0$. Then, it follows $g_{ik}+g_{il}=0$ and $e_{ij}=-d^2_{ij}<0$ which implies $g_{ij}<0$. Similarly, $g_{jk}+g_{jl}=0$, $g_{jk}+g_{ik}=0$, $g_{il}+g_{jl}=0$, and $g_{kl}<0$.

\ \

To consider the cases of Figs. \ref{fig:incorrect3d}f, g, h, from the similar analysis as the proof of Lemma \ref{lemma:Incorrect2D}, we use
\begin{itemize}
\item[--]  For the neighboring pairs $(a,b), (a,c), (b,c)$, suppose that $a,b,c$ are collinear and $||\textbf{z}_{ab}||<||\textbf{z}_{ac}||$. If the distance constraints set is feasible, then $g_{ac} \le 0$ and $g_{ab} \ge 0$ imply $g_{bc}<0$.
\item[--] Suppose that agents $a,b,c,d$ are collinear, and $||\textbf{z}_{ab}||<||\textbf{z}_{ac}||<||\textbf{z}_{ad}||$ and $g_{ab}\textbf{z}_{ab}+g_{ac}\textbf{z}_{ac}+g_{ad}\textbf{z}_{ad}=0$. If $g_{ab} < 0$ or $g_{ad} > 0$, then $g_{ab}+g_{ac}+g_{ad}<0$.
\end{itemize}

Now we consider the stability of $\textbf{p}^* \in \mathcal{Q}_I$.
\item Case of Fig.\ref{fig:incorrect3d}f: From the balance of agents $i,j$, we have ${\mathop{\rm sgn}} (g_{ik}) =  - {\mathop{\rm sgn}} (g_{il})$, $|g_{ik}|>|g_{il}|$, ${\mathop{\rm sgn}} (g_{jk}) =  - {\mathop{\rm sgn}} (g_{jl})$, and $|g_{jk}|>|g_{jl}|$.
Suppose that $g_{ik}>0$ and $g_{jk}>0$. From the balance of agent $k$, we have $g_{kl}>0$. Since $g_{ik}>0,g_{kl}>0$ and $g_{il}<0$, it means that $\bar{d}_{il}>||\textbf{z}_{il}||=||\textbf{z}_{ik}||+||\textbf{z}_{kl}||>\bar{d}_ik+\bar{d}_kl$. This contradicts (\ref{eq:relizable_ineq}); so $g_{ik} \le 0$ or $g_{jk} \le 0$.
\begin{itemize}
\item[--] If $g_{ik} \le 0$: From the balance of agent $i$, we have $g_{il} \ge 0$, $g_{il} \ge 0$ and $g_{ik} \le 0$, which implies $g_{kl}<0$. Since $||\textbf{z}_{ik}||<||\textbf{z}_{il}||$, we have $g_{ij}+g_{ik}+g_{il} \le g_{ij}<0$ and $g_{il}+g_{jl}+g_{kl} <0$.
\item[--] If $g_{jk} \le 0$: Similarly, we have $g_{ij}+g_{jk}+g_{jl} \le g_{ij}<0$ and $g_{il}+g_{jl}+g_{kl} <0$.
\end{itemize}
\item Case of Fig.\ref{fig:incorrect3d}g: From the balance of agents $i$ and $j$, we have ${\mathop{\rm sgn}} (g_{ik}) =  {\mathop{\rm sgn}} (g_{il})$ and ${\mathop{\rm sgn}} (g_{jk}) =  {\mathop{\rm sgn}} (g_{jl})$. Assume $g_{ik} \ge 0$ and $g_{jk} \ge 0$, then $g_{il}>0, g_{jl}>0$. So $g_{kl}>0$; but this cannot happen due to the balance of agent $k$ and distance constraints set being feasible.
Observe that $i$ and $j$ have the same role in this case. Consider $g_{ik} < 0$. If $g_{kl}<0$ then $g_{jk}>0$, which implies $g_{jl}>0$, but this cannot happen. So, $g_{kl}>0$. Since $||\textbf{z}_{kl}||>||\textbf{z}_{ki}||, ||\textbf{z}_{kl}||>||\textbf{z}_{kj}||, ||\textbf{z}_{lk}||>||\textbf{z}_{li}||, ||\textbf{z}_{lk}||>||\textbf{z}_{li}||$, we have $g_{ik}+g_{jk}+g_{kl} < 0$ and $g_{il}+g_{jl}+g_{kl} < 0$.
\item Case of Fig.\ref{fig:incorrect3d}h: There are some available possibilities:
\begin{itemize}
\item[--] $g_{ij}<0,g_{il} > 0$: Since  ${||\textbf{z}_{ij}||<||\textbf{z}_{ik}||<||\textbf{z}_{il}||}$ and ${||\textbf{z}_{lk}||<||\textbf{z}_{lj}||<||\textbf{z}_{li}||}$, we have $g_{ij}+g_{ik}+g_{il} < 0$, $g_{il}+g_{jl}+g_{kl} < 0$.
\item[--] $g_{ij}<0, g_{il} \le 0$: From the balance of agent $i$, we have $g_{ik}>0$. Thus $g_{ik}>0$ and $g_{il} \le 0$ imply $g_{kl}<0$. Then, $g_{ij}+g_{ik}+g_{il} < 0$ and $g_{il}+g_{jl}+g_{kl} < 0$.
\item[--] $g_{ij}=0,g_{il} \ge 0$: If $g_{il} > 0$, from the balance of agent $i$, we have $g_{ik} < 0$, which implies $g_{jk}<0$. From the balance of agent $j$, we have $g_{jl}>0$. Also, from $||\textbf{z}_{jk}||<||\textbf{z}_{jl}||, {||\textbf{z}_{lk}||<||\textbf{z}_{lj}||<||\textbf{z}_{li}||}$, we have $g_{ij}+g_{jk}+g_{jl} < 0$ and $g_{il}+g_{jl}+g_{kl} < 0$. If $g_{ij}=g_{ik}=g_{il}=0$, by the similar proof as in Lemma \ref{lemma:Incorrect2D}, we have the same result.
\item[--] $g_{ij}\ge 0, g_{il} < 0$: From the balance of agent $i$, we have $g_{ik}>0$. Thus, $g_{ij}\ge 0$ and $g_{il} < 0$ imply $g_{jl}<0$. Also $g_{il}< 0$ and $g_{ik} > 0$ imply $g_{kl}<0$. But, this cannot happen due to the balance of agent $l$.
\item[--] $g_{ij}>0,g_{il} \ge 0$: From the balance of agent $i$, we have $g_{ik}<0$. Thus, $g_{ik}< 0$ and $g_{ij} > 0$ imply $g_{jk}<0$. From the balance of agent $k$, we have $g_{kl}<0$. So $g_{ik}+g_{jk}+g_{kl} < 0$. Since ${||\textbf{z}_{lk}||<||\textbf{z}_{lj}||<||\textbf{z}_{li}||}$, we have $g_{il}+g_{jl}+g_{kl} < 0$.
\end{itemize}
\end{itemize}
\end{document}